\newtheorem{theorem}{Theorem}[section]
\newtheorem{lemma}{Lemma}[section]
\newtheorem{definition}{Definition}[section]
\newtheorem{remark}{Remark}[section]
\newtheorem{example}{Example}[section]
\begin{document}
\begin{center}
{\Large{\bf Weighted $\beta\gamma$-summability of fuzzy functions of order $\theta$}}\\\vspace{.5cm}
Sarita Ojha and P. D. Srivastava\\
Department of Mathematics, Indian Institute of Technology,\\ Kharagpur 721302, India
\end{center}
\vspace{1cm}

\noindent {\bf Keywords: } Sequences of fuzzy numbers; Fuzzy function; Weighted statistical convergence; Weighted summability.\\
{\bf AMS subject classification }46S40;  03E72; 40A35

\section*{Abstract}
The concept of weighted $\beta\gamma$ - summability of order $\theta$ in case of fuzzy functions is introduced and classified into ordinary and absolute sense. Several inclusion relations among the sets are investigated. Also we have found some suitable conditions to get its relation with the generalized statistical convergence. Finally we have proved a generalized version of Tauberian theorem.

\section{Introduction}
Convergence of sequences, in classical or fuzzy sense, means that almost all
elements of the sequence have to belong to an arbitrary small neighborhood
of the limit. The aim to introduce the concept of statistical convergence is
to relax the above condition and to examine the convergence criterion only
for majority of elements. The recent trend in mathematics is to investigate the above type of convergence in a more generalized way and link it with summability theory for sequences of real as well as fuzzy numbers.\\
After introducing the concept of statistical convergence for scalar sequences by Fast \cite{F}, several development in this direction i.e. convergence along with the summability methods, for the sequences of real or complex numbers as well as for fuzzy numbers, can be enumerated as follows:
\begin{enumerate}[(i)]
\item $\lambda$ - statistical convergence of order $\beta$ \cite{At}, \cite{CB}, \cite{EAM}.
\item lacunary statistical convergence of order $\beta$ \cite{AG}, \cite{Kw1}, \cite{N}.
\item weighted statistical convergence \cite{A}, \cite{MM}.
\end{enumerate}
Recently Aktuglu \cite{A} has given a general type of convergence by defining $\beta\gamma$ - density and in the light of this he has defined statistical convergence. Motivated by Gong et al \cite{ZZZ},  Ojha and Srivastava \cite{SO1} has introduced weighted $\beta\gamma$ - statistical convergence for fuzzy functions by using the above density and given three types of classifications of it.\\
In the existing literature of summability method for fuzzy numbers, it is observed that the authors are dealing either with the fuzzy numbers or with the metric values of the fuzzy numbers. The main aim of introducing weighted $\beta\gamma$ - summability is to investigate these both type of definitions and their inter-relations. A generalized version of Tauberian theorem is also established. This summability method not only includes some well known matrix methods but also gives some non-regular matrix methods.

\section{Preliminaries}
A fuzzy real number $\hat{x}:\mathbb{R}\rightarrow [0,1]$ is a fuzzy set which is normal, fuzzy convex, upper semicontinuous and $[\hat{x}]_0=\{t\in \mathbb{R}:\hat{x}(t)>0\}$ is compact. Clearly, $\mathbb{R}$ is embedded in $L(R)$, the set of all fuzzy numbers, in the following way:\\
\noindent For each $r\in \mathbb{R}, \overline{r}\in L(R)$ is defined as,
\begin{center}
$\overline{r}(t) = \left\{
\begin{array}{c l}
  1, & t=r \\
  0, & t\neq r
\end{array}
\right.$\end{center}
\noindent For $0<\alpha\leq1$, $\alpha$-cut of $\hat{x}$ is defined by $[\hat{x}]_{(\alpha)}=\{t\in\mathbb{R}:\hat{x}(t)\geq\alpha\}=[(\hat{x})_{\alpha} ^-,(\hat{x})_{\alpha} ^+]$, a closed and bounded interval of $\mathbb{R}$. Now for any two fuzzy numbers $\hat{x},\hat{y}$, Matloka \cite{M} has proved that $L(R)$ is a complete metric space under the metric $d$ defined by
\begin{equation*}
d(\hat{x},\hat{y})=\sup\limits_{0\leq \alpha \leq1} \max\{|(\hat{x})_{\alpha} ^--(\hat{y})_{\alpha} ^-|,|(\hat{x})_{\alpha} ^+-(\hat{y})_{\alpha} ^+|\}
\end{equation*}
For any $\hat{x},\hat{y},\hat{z},\hat{w}\in L(R)$, the metric $d$ satisfies
\begin{enumerate}[(i)]
\item $d(c\hat{x},c\hat{y})=|c| d(\hat{x},\hat{y})$, $c\in\mathbb{R}$.
\item $d(\hat{x}+\hat{z},\hat{y}+\hat{z})=d(\hat{x},\hat{y})$.
\item $d(\hat{x}+\hat{z},\hat{y}+\hat{w})\leq d(\hat{x},\hat{y})+d(\hat{z},\hat{w})$.
\end{enumerate}

\noindent For any two fuzzy numbers $\hat{x},\hat{y}\in L(R)$, a partial order relation $\preceq$ is defined as
\begin{equation*}
\hat{x}\preceq \hat{y}  \ \Leftrightarrow \ (\hat{x})_{\alpha} ^- \leq (\hat{y})_{\alpha} ^-\ \mbox{ and } \ (\hat{x})_{\alpha} ^+\leq (\hat{y})_{\alpha} ^+
\end{equation*}
holds for each $\alpha\in[0,1]$.
\begin{lemma}
For the given fuzzy numbers $\hat{x},\hat{y}\in L(R)$, the following statements are equivalent:
\begin{enumerate}[(i)]
\item $d(\hat{x},\hat{y})\leq \varepsilon$.
\item $\hat{x}-\varepsilon \preceq \hat{y} \preceq \hat{x}+\varepsilon$.
\end{enumerate}
\end{lemma}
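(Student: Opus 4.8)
The plan is to prove both implications at once by reducing the partial order and the metric to pointwise statements about the $\alpha$-cut endpoints $(\hat{x})_\alpha^{\mp}$ and $(\hat{y})_\alpha^{\mp}$. The key observation is that adding or subtracting the (embedded) crisp number $\varepsilon$ shifts every $\alpha$-cut rigidly, so that $(\hat{x}-\varepsilon)_\alpha^-=(\hat{x})_\alpha^--\varepsilon$, $(\hat{x}-\varepsilon)_\alpha^+=(\hat{x})_\alpha^+-\varepsilon$, and likewise for $\hat{x}+\varepsilon$. This lets me unwind condition (ii) entirely in terms of the endpoints.

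First I would rewrite (ii) using the definition of $\preceq$. The relation $\hat{x}-\varepsilon\preceq\hat{y}$ says $(\hat{x})_\alpha^--\varepsilon\le(\hat{y})_\alpha^-$ and $(\hat{x})_\alpha^+-\varepsilon\le(\hat{y})_\alpha^+$ for every $\alpha\in[0,1]$, while $\hat{y}\preceq\hat{x}+\varepsilon$ says $(\hat{y})_\alpha^-\le(\hat{x})_\alpha^-+\varepsilon$ and $(\hat{y})_\alpha^+\le(\hat{x})_\alpha^++\varepsilon$. Combining the two bounds for each endpoint gives, for every $\alpha$,
\begin{equation*}
|(\hat{x})_\alpha^--(\hat{y})_\alpha^-|\le\varepsilon \quad\text{and}\quad |(\hat{x})_\alpha^+-(\hat{y})_\alpha^+|\le\varepsilon,
\end{equation*}
so that $\max\{|(\hat{x})_\alpha^--(\hat{y})_\alpha^-|,\,|(\hat{x})_\alpha^+-(\hat{y})_\alpha^+|\}\le\varepsilon$ holds for all $\alpha$.

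Next, for the equivalence with (i), I would invoke the elementary fact that a supremum is at most $\varepsilon$ exactly when every member of the indexed family is at most $\varepsilon$. Thus the pointwise bound above holds for all $\alpha$ if and only if $\sup_{0\le\alpha\le1}\max\{\cdots\}=d(\hat{x},\hat{y})\le\varepsilon$, which is (i). Reading the chain of equivalences backwards establishes (i) $\Rightarrow$ (ii): from $d(\hat{x},\hat{y})\le\varepsilon$ one recovers the two endpoint bounds for each $\alpha$, hence the four one-sided inequalities that constitute the pair of $\preceq$-relations in (ii).

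Since every step is a biconditional, I do not expect a genuine obstacle. The only points requiring slight care are justifying the endpoint-shift formula for $\hat{x}\pm\varepsilon$ (i.e.\ that the embedded scalar acts by translation on each cut, using the additive structure of $L(R)$), and the passage between ``supremum $\le\varepsilon$'' and ``each term $\le\varepsilon$,'' which must be argued through the least-upper-bound property rather than asserted termwise.
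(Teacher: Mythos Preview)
Your argument is correct: unwinding the definitions of $d$ and $\preceq$ in terms of the $\alpha$-cut endpoints, together with the fact that the embedded crisp number $\varepsilon$ translates each cut rigidly, yields the equivalence as a chain of biconditionals exactly as you outline. The paper itself states this lemma in the preliminaries without proof, so there is no proof to compare against; your write-up supplies precisely the routine verification that the authors omit.
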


\begin{definition} [Statistical Convergence]
A sequence $x=(x_k)$ of real or complex numbers is said to be statistically convergent to a
number $l$ if for every $\varepsilon>0$,
\begin{equation*}
\lim\limits_{n\rightarrow\infty} \frac{1}{n}|\{k\leq n:|x_k-l|\geq
\varepsilon\}|=0
\end{equation*}
\end{definition}
\noindent where the vertical bars $|\cdot|$ indicate the number of elements in the enclosed set.

\noindent Many authors have generalized the definition of statistical convergence for the real or complex numbers. The most recent generalization in this direction is the idea of $\beta\gamma$ - statistical convergence which is introduced by Aktuglu \cite{A} as follows:
\begin{definition}
Let $(\beta_n),(\gamma_n)$ be two sequences of positive numbers s.t.
\begin{enumerate}[(i)]
\item $(\beta_n)$, $(\gamma_n)$ are both non-decreasing
\item $\gamma_n\geq \beta_n$
\item $\gamma_n-\beta_n\to\infty$ as $n\to\infty$
\end{enumerate}
Then a sequence $(x_k)$ of real numbers is said to be $\beta\gamma$ - statistically convergent of order $\theta$ to $L$ if for every $\varepsilon>0$,
\begin{equation*}
\lim\limits_{n\to\infty} \frac{1}{(\gamma_n-\beta_n+1)^{\theta}} \Big|\Big\{k\in[\beta_n,\gamma_n]: |x_k-L|\geq\varepsilon\Big\}\Big|= 0.
\end{equation*}
\end{definition}

\begin{remark}
The above definition includes the following cases:
\begin{enumerate}[(i)]
\item By choosing $\beta_n=1,\gamma_n=n$, this $\beta\gamma$ - convergence coincides with the usual statistical convergence as discussed by Fast \cite{F}.
\item Let $(\lambda_n)$ be a non-decreasing sequence of positive real numbers tending to $\infty$ such that $\lambda_1=1,\lambda_{n+1}\leq \lambda_n+1$ for all $n$. Then by setting $\beta_n=n-\lambda_n+1$ and $\gamma_n=n$, $\beta\gamma$ - convergence coincides with the concept of $\lambda$ - statistical convergence of order $\theta$ as as discussed by Colak et al \cite{CB}.
\item Choosing $\beta_r=k_{r-1}+1,\gamma_r=k_r$ where $(k_r)$ is an increasing integer sequence with $k_0=0$ and $h_r=k_r-k_{r-1}$, then the $\beta\gamma$ - convergence coincides with lacunary statistical convergence of order $\theta$ given by Aktuglu \cite{AG}.
\end{enumerate}
\end{remark}

\noindent Ojha and Srivastava \cite{SO1} have introduced the concept of weighted $\beta\gamma$ - statistical convergence in case of fuzzy functions with the modification given by Ghoshal \cite{G1} for the weighted convergence definition as follows: \\
Let $\theta$ be a real number such that $0<\theta\leq1$ and $t=(t_k)$ be a sequence of non-negative real numbers such that $\liminf\limits_k t_k>0$ and suppose
\begin{equation*}
T_{\beta\gamma (n)}=\sum\limits_{k\in  [\beta_n,\gamma_n]} t_k,\ n\in\mathbb{N}
\end{equation*}

\begin{definition}
A sequence $\hat{f_k}:[a,b]\to L(R)$ of fuzzy functions is said to be weighted $\beta\gamma$ - pointwise statistical convergent of order $\theta$ to $\hat{f}$ if for every $\varepsilon>0$ and for each $x\in[a,b]$,
\begin{equation*}
\lim\limits_{n\to\infty} \frac{1}{T_{\beta\gamma (n)} ^{\theta}} \Big|\Big\{k\leq T_{\beta\gamma (n)}: t_kd(\hat{f_k}(x),\hat{f}(x))\geq\varepsilon\Big\}_x\Big|=0
\end{equation*}
where $\{\cdot\}_x$ denotes that the set depends on the point $x$. Denote the set of all weighted $\beta\gamma$ - pointwise statistical convergent sequence of fuzzy functions of order $\theta$ by $SP_{\beta\gamma} ^{\theta}(t)$.
\end{definition}


\begin{definition}[Weighted strong Cesaro summability] \cite{G}\\
Let $(t_n)$ be a sequence of nonnegative real numbers such that $t_1>0$, $T_n=t_1+t_2+\cdots+t_n$ and $T_n\to\infty$ as $n\to\infty$. Then a sequence of real numbers $(x_n)$ is said to be Weighted strong Cesaro convergence to a real number $x$ if
\begin{equation*}
\lim\limits_{n\rightarrow\infty} \frac{1}{T_n}\sum\limits_{k=1} ^n t_k|x_k-x|=0.
\end{equation*}
\end{definition}
Use of various type of summability methods has motivated us to define the notion of weighted $\beta\gamma$ - summability of order $\theta$ for fuzzy functions. We have also classified the method into ordinary and absolute sense and obtained some interesting results on it.

\section{Absolutely weighted $\beta\gamma$-summability of order $\theta$}
Let $t=(t_n)$ be a sequence of positive real numbers such that $t_1>0$ and
\begin{equation}
T_{\beta\gamma(n)}=\sum\limits_{k\in[\beta_n,\gamma_n]} t_k \to\infty
\end{equation}
as $n\to\infty$. Let $\hat{f},\hat{f}_k:[a,b]\to\mathbb{R}$ be a sequence of fuzzy functions and $\theta$ belongs to $(0,1]$. For each $x\in[a,b]$, consider
\begin{equation*}
s_n(x)=\frac{1}{T_{\beta\gamma(n)} ^{\theta}} \sum\limits_{k\in[\beta_n,\gamma_n]} t_kd(\hat{f_k}(x),\hat{f}(x))
\end{equation*}
\begin{definition}
A sequence $(\hat{f_k})$ of fuzzy functions is said to be {\bf absolutely weighted $\beta\gamma$ - summable} to a fuzzy function $\hat{f}$ of order $\theta$ if
\begin{equation*}
s_n(x)\to 0\ \mbox{ for every $x\in[a,b]$}
\end{equation*}
We denote this convergence as $N_{\beta\gamma} ^{\theta} (t)$. In case of $\theta=1$, the set $N_{\beta\gamma} ^{\theta} (t)$ reduces to $N_{\beta\gamma} (t)$.
\end{definition}

\begin{remark}[Particular cases]
For $t_n=1$ and
\begin{enumerate}[(i)]
\item $\beta_n=1,\gamma_n=n, \theta=1$, $N_{\beta\gamma} ^{\theta} (t)$ reduces to Cesaro summability which is defined for fuzzy numbers by Kwon \cite{Kw}.
\item $\beta_n=n-\lambda_n+1,\gamma_n=n$, where $\lambda$ is defined as in Remark 1(ii), then $N_{\beta\gamma} ^{\theta} (t)$ gives the class of fuzzy numbers which is $\lambda$ -summable \cite{S}, $\lambda$ -summable of order $\theta$ \cite{At} and the class of $\lambda$ - summable of order $\theta$ of fuzzy functions by \cite{SO}.
\item $\beta_n=k_{n-1}+1,\gamma_n=k_n$ where $(k_n)$ is defined as in Remark 1(iii), then $N_{\beta\gamma} ^{\theta} (t)$ is the class of all lacunary summable sequence which is due to Kwon et al \cite{Kw1} for fuzzy numbers.
\end{enumerate}
\end{remark}

\begin{theorem}
Let $\hat{f}_k:[a,b]\to L(R)$ and $\hat{g}_k:[a,b]\to L(R)$ be two sequence of fuzzy functions. Then
\begin{enumerate}[(i)]
\item If $(\hat{f}_k),(\hat{g}_k)\in N_{\beta\gamma} ^{\theta} (t)$, then
$(\hat{f}_k+\hat{g}_k),(c\hat{f}_k)\in N_{\beta\gamma} ^{\theta} (t)$ for $c\in\mathbb{R}$.
\item If $0<\theta\leq\delta\leq1$, then $N_{\beta\gamma} ^{\theta} (t)\subseteq N_{\beta\gamma} ^{\delta} (t)$. The inclusion is strict for some $\theta,\delta$ where $\theta<\delta$.
\end{enumerate}
\end{theorem}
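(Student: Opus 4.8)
The plan is to handle (i) and (ii) separately, in each case reducing everything to the scalar quantity $s_n(x)$ and exploiting the three metric properties (i)--(iii) listed after the definition of $d$.

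For part (i), I would fix the limits: suppose $(\hat f_k)\in N_{\beta\gamma}^{\theta}(t)$ with limit $\hat f$ and $(\hat g_k)\in N_{\beta\gamma}^{\theta}(t)$ with limit $\hat g$. For the sum, apply property (iii) pointwise to get
\begin{equation*}
d\big(\hat f_k(x)+\hat g_k(x),\,\hat f(x)+\hat g(x)\big)\le d(\hat f_k(x),\hat f(x))+d(\hat g_k(x),\hat g(x)),
\end{equation*}
then multiply by $t_k\ge 0$, sum over $k\in[\beta_n,\gamma_n]$, and divide by $T_{\beta\gamma(n)}^{\theta}$. The right-hand side splits into the two defining expressions, each tending to $0$, so $(\hat f_k+\hat g_k)$ is summable to $\hat f+\hat g$. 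For the scalar multiple, property (i) gives $d(c\hat f_k(x),c\hat f(x))=|c|\,d(\hat f_k(x),\hat f(x))$, so the corresponding average equals $|c|\,s_n(x)$ and hence tends to $0$. This part is routine.

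For the inclusion in (ii), the key observation is that $T_{\beta\gamma(n)}\to\infty$, so $T_{\beta\gamma(n)}\ge 1$ for all large $n$; combined with $\theta\le\delta$ this yields $T_{\beta\gamma(n)}^{\theta}\le T_{\beta\gamma(n)}^{\delta}$, i.e. $T_{\beta\gamma(n)}^{-\delta}\le T_{\beta\gamma(n)}^{-\theta}$. Since every term $t_k d(\hat f_k(x),\hat f(x))$ is nonnegative, this gives, for all large $n$,
\begin{equation*}
\frac{1}{T_{\beta\gamma(n)}^{\delta}}\sum_{k\in[\beta_n,\gamma_n]}t_k d(\hat f_k(x),\hat f(x))\le \frac{1}{T_{\beta\gamma(n)}^{\theta}}\sum_{k\in[\beta_n,\gamma_n]}t_k d(\hat f_k(x),\hat f(x)),
\end{equation*}
so the left side tends to $0$ whenever the right side does. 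Hence the same limit $\hat f$ works and $N_{\beta\gamma}^{\theta}(t)\subseteq N_{\beta\gamma}^{\delta}(t)$.

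The part that needs genuine work is the strictness, and it is the step I expect to be the main obstacle. I would specialise to a convenient case, say $t_k=1$, $\beta_n=1$, $\gamma_n=n$ (so $T_{\beta\gamma(n)}=n$), fix $\theta<\delta$, pick an intermediate exponent $\alpha$ with $\theta<\alpha<\delta$, and build a sequence of constant (hence $x$-independent) fuzzy functions taking only the values $\bar 0$ and $\bar 1$, placing $\bar 1$ on a sparse set of indices whose counting function up to $n$ grows like $n^{\alpha}$ (for instance $\hat f_k=\bar 1$ iff $k=\lceil m^{1/\alpha}\rceil$ for some $m\in\mathbb N$, and $\hat f_k=\bar 0$ otherwise), with candidate limit $\hat f=\bar 0$. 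Then $\sum_{k=1}^{n} d(\hat f_k,\bar 0)\asymp n^{\alpha}$, so dividing by $n^{\delta}$ gives $\asymp n^{\alpha-\delta}\to 0$ (membership in $N_{\beta\gamma}^{\delta}(t)$), while dividing by $n^{\theta}$ gives $\asymp n^{\alpha-\theta}\to\infty$. The delicate point is ruling out summability of order $\theta$ to \emph{every} possible limit, not just to $\bar 0$: I would argue that any candidate $\hat g\ne\bar 0$ contributes the positive constant $d(\bar 0,\hat g)$ on the asymptotically full set of indices where $\hat f_k=\bar 0$, forcing the order-$\theta$ average to be $\gtrsim n^{1-\theta}\to\infty$ (here $\theta<\delta\le 1$ forces $\theta<1$); thus $\bar 0$ is the only possible limit, and for that limit the average already diverges. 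This exhibits $(\hat f_k)\in N_{\beta\gamma}^{\delta}(t)\setminus N_{\beta\gamma}^{\theta}(t)$ and establishes strictness.
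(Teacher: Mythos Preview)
Your argument for (i) and the inclusion in (ii) is exactly the paper's: the same two metric inequalities for sums and scalar multiples, and the same monotonicity $T_{\beta\gamma(n)}^{\theta}\le T_{\beta\gamma(n)}^{\delta}$ once $T_{\beta\gamma(n)}\ge 1$.

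Where you diverge is in the strictness example. The paper takes $t_k=1$ (but keeps $(\beta_n,\gamma_n)$ general), sets $\hat f_k(x)=\bar 0$ on perfect squares and $\hat f_k(x)=\hat f(x)$ otherwise for a fixed bounded $\hat f$, and observes that the order-$\theta$ average equals $\dfrac{[\sqrt{\gamma_n-\beta_n+1}]}{(\gamma_n-\beta_n+1)^{\theta}}\,d(\hat f(x),\bar 0)$, which tends to $0$ for $\theta>1/2$ and diverges for $\theta<1/2$; strictness is then witnessed only for pairs with $\theta<1/2<\delta$. Your construction instead fixes $\beta_n=1,\gamma_n=n$, introduces an intermediate exponent $\alpha\in(\theta,\delta)$, and places $\bar 1$ on a set of density $\asymp n^{\alpha}$, which separates \emph{every} pair $\theta<\delta$ rather than only those straddling $1/2$. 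You also handle a point the paper leaves implicit: you verify that no alternative limit $\hat g\ne\bar 0$ can work at order $\theta$, using that $\theta<\delta\le 1$ forces $\theta<1$ so the contribution from the full-density set where $\hat f_k=\bar 0$ already blows up. Both routes are valid for the stated claim (``strict for \emph{some} $\theta,\delta$''); yours is a bit more robust in the exponents and more careful about the definition of membership in $N_{\beta\gamma}^{\theta}(t)$, while the paper's keeps the window $[\beta_n,\gamma_n]$ general.
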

\begin{proof}
\begin{enumerate}[(i)]
\item This part follows from the following relations:
\begin{eqnarray*}
d(\hat{f}_k(x)+\hat{g}_k(x),\hat{f}(x)+\hat{g}(x)) &\leq& d(\hat{f}_k(x),\hat{f}(x))+d(\hat{g}_k(x),\hat{g}(x))\\
\mbox{and}\  \ d(c\hat{f}_k(x),c\hat{f}(x)) &=& |c|d(\hat{f}_k(x),\hat{f}(x))
\end{eqnarray*}
for each $x\in[a,b]$. Since $(t_k)$ are positive real numbers, so the result follows.
\item Since $T_{\beta\gamma(n)}\to\infty$ as $n\to\infty$. So for some fixed $n_0\in\mathbb{N}$, $T_{\beta\gamma(n)}\geq1$ for all $n\geq n_0$ and hence for $\theta\leq\delta$, we have $T_{\beta\gamma(n)} ^{\theta}\leq T_{\beta\gamma(n)} ^{\delta}$ for all $n\geq n_0$. Therefore
    \begin{equation*}
    \frac{1}{T_{\beta\gamma(n)} ^{\delta}} \sum\limits_{k\in[\beta_n,\gamma_n]} t_kd(\hat{f_k}(x),\hat{f}(x))\leq \frac{1}{T_{\beta\gamma(n)} ^{\theta}} \sum\limits_{k\in[\beta_n,\gamma_n]} t_kd(\hat{f_k}(x),\hat{f}(x))\ \mbox{for all } n\geq n_0
    \end{equation*}
    Which implies $N_{\beta\gamma} ^{\theta} (t)\subset N_{\beta\gamma} ^{\delta} (t)$. To show the inequality is strict, consider the following example.
    \begin{example}
    Construct the sequence of functions $(\hat{f}_k)$ as:
    \begin{equation*}
    \hat{f}_k(x)=\begin{cases}
    \bar{0}, & k \mbox{ is a square number}\\
    \hat{f}(x), & \mbox{otherwise}
    \end{cases}
    \end{equation*}
    for each $x\in[a,b]$ and $\hat{f}:[a,b]\to L(R)$ be a bounded fuzzy function i.e. for $x\in[a,b]$, $d(\hat{f}(x),\bar{0})\leq M$ (say). Let $t_k=1$ for all $k$. Then we have
    \begin{eqnarray*}
    \frac{1}{(\gamma_n-\beta_n+1) ^{\theta}} \sum\limits_{k\in[\beta_n,\gamma_n]} t_kd(\hat{f_k}(x),\hat{f}(x)) = \frac{\Big[\sqrt{(\gamma_n-\beta_n+1)}\Big]}{(\gamma_n-\beta_n+1) ^{\theta}}d(\hat{f}(x),\bar{0})
    \end{eqnarray*}
    where $[ y ]$ is the greatest integer function less than or equal to $y$. Now
      \begin{eqnarray*}
       \frac{\sqrt{(\gamma_n-\beta_n+1)}-1}{(\gamma_n-\beta_n+1) ^{\theta}}d(\hat{f}(x),\bar{0}) &\leq& \frac{\Big[\sqrt{(\gamma_n-\beta_n+1)}\Big]}{(\gamma_n-\beta_n+1) ^{\theta}}d(\hat{f}(x),\bar{0})\\ && \leq  \frac{\sqrt{(\gamma_n-\beta_n+1)}}{(\gamma_n-\beta_n+1) ^{\theta}}d(\hat{f}(x),\bar{0})
    \end{eqnarray*}
    So from the above inequality, it is clear that $s_n(x)\to 0$ for choosing $\theta>1/2$ and $s_n(x)$ is divergent for $\theta<1/2$ as $\hat{f}(x)$ is bounded. So by choosing $\theta<1/2<\delta$, we can conclude that $(\hat{f_k}) \in N_{\beta\gamma} ^{\delta} (t)$ but $(\hat{f_k}) \notin N_{\beta\gamma} ^{\theta} (t)$.
    \end{example}
\end{enumerate}
This completes the proof.
\end{proof}

\begin{theorem}
The set $N_{\beta\gamma} (t)\cap B^F [a,b]$ is a closed subset of $B^F [a,b]$ where $B^F[a,b]$ is the set of all bounded fuzzy functions on $[a,b]$.
\end{theorem}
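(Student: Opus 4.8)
The plan is to use the sequential characterisation of closed sets: I would show that whenever a sequence of members of $N_{\beta\gamma}(t)\cap B^F[a,b]$ converges in $B^F[a,b]$, its limit again belongs to $N_{\beta\gamma}(t)$. I regard a point of the ambient space as a bounded sequence $(\hat f_k)$ of fuzzy functions on $[a,b]$, equipped with the supremum metric $\rho\big((\hat f_k),(\hat g_k)\big)=\sup_k\sup_{x\in[a,b]}d(\hat f_k(x),\hat g_k(x))$; since $L(R)$ is complete under $d$ (Matloka \cite{M}), this ambient space is itself complete. So I would start from a sequence $\big(\hat f^{(i)}\big)_i$, with $\hat f^{(i)}=(\hat f^{(i)}_k)_k$ absolutely weighted $\beta\gamma$-summable to some limit function $\hat g^{(i)}$, and with $\rho\big(\hat f^{(i)},\hat f\big)\to0$ for some bounded $\hat f=(\hat f_k)_k$. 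The goal is to produce a limit function $\hat g$ for $(\hat f_k)$ and confirm $(\hat f_k)\in N_{\beta\gamma}(t)$.

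The decisive step is to show that the limit functions $(\hat g^{(i)})$ form a Cauchy sequence for the supremum metric $D(\hat g,\hat h)=\sup_x d(\hat g(x),\hat h(x))$. Fixing $x$, I would insert $\hat f^{(i)}_k(x)$ and $\hat f^{(j)}_k(x)$ using the triangle inequality for $d$:
\[
d\big(\hat g^{(i)}(x),\hat g^{(j)}(x)\big)\le d\big(\hat g^{(i)}(x),\hat f^{(i)}_k(x)\big)+d\big(\hat f^{(i)}_k(x),\hat f^{(j)}_k(x)\big)+d\big(\hat f^{(j)}_k(x),\hat g^{(j)}(x)\big).
\]
Multiplying by $t_k$, summing over $k\in[\beta_n,\gamma_n]$ and dividing by $T_{\beta\gamma(n)}=\sum_{k\in[\beta_n,\gamma_n]}t_k$, the left-hand side is unchanged (it is constant in $k$ while the weights sum to $T_{\beta\gamma(n)}$), the two outer terms become $s_n^{(i)}(x)$ and $s_n^{(j)}(x)$, and the middle term is at most $\rho\big(\hat f^{(i)},\hat f^{(j)}\big)$. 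Letting $n\to\infty$ and using summability to annihilate the $s_n$ terms yields $d(\hat g^{(i)}(x),\hat g^{(j)}(x))\le\rho(\hat f^{(i)},\hat f^{(j)})$ for every $x$, hence $D(\hat g^{(i)},\hat g^{(j)})\le\rho(\hat f^{(i)},\hat f^{(j)})$. Since $(\hat f^{(i)})$ is convergent, hence Cauchy, $(\hat g^{(i)})$ is Cauchy too, and by completeness it converges to some bounded $\hat g$ with $D(\hat g^{(i)},\hat g)\to0$.

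It then remains to verify $(\hat f_k)\in N_{\beta\gamma}(t)$ with limit $\hat g$, which I would do by an $\varepsilon/3$ estimate. For fixed $x$, a triangle inequality through the intermediate points $\hat f^{(i)}_k(x)$ and $\hat g^{(i)}(x)$, followed by the same weighted averaging, gives
\[
s_n(x)\le\rho\big(\hat f,\hat f^{(i)}\big)+s_n^{(i)}(x)+D\big(\hat g^{(i)},\hat g\big).
\]
Given $\varepsilon>0$, I would first fix $i$ so large that $\rho(\hat f,\hat f^{(i)})$ and $D(\hat g^{(i)},\hat g)$ are each below $\varepsilon/3$, and then, for that $i$, choose $N$ with $s_n^{(i)}(x)<\varepsilon/3$ for $n\ge N$; hence $s_n(x)\to0$ for each $x$, and the intersection is closed. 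I expect the main obstacle to be the Cauchy step: one must exploit that $d(\hat g^{(i)}(x),\hat g^{(j)}(x))$ is constant in $k$, so that its weighted mean returns the value itself, and must control the cross-term $d(\hat f^{(i)}_k(x),\hat f^{(j)}_k(x))$ uniformly in $k$ (and $x$) by the ambient metric rather than pointwise, so that the bound survives the passage $n\to\infty$.
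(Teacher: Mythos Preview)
Your proposal is correct and follows essentially the same route as the paper: both arguments use the triangle inequality $d(\hat g^{(i)},\hat g^{(j)})\le d(\hat g^{(i)},\hat f^{(i)}_k)+d(\hat f^{(i)}_k,\hat f^{(j)}_k)+d(\hat f^{(j)}_k,\hat g^{(j)})$, then pass through the weighted mean over $k\in[\beta_n,\gamma_n]$ to show the limit functions $(\hat g^{(i)})$ are Cauchy, and finish with an $\varepsilon/3$ estimate to conclude $(\hat f_k)\in N_{\beta\gamma}(t)$ with limit $\hat g$. Your version is in fact somewhat more careful than the paper's, since you make the ambient supremum metric $\rho$ explicit and use it to control the cross term $d(\hat f^{(i)}_k,\hat f^{(j)}_k)$ uniformly in $k$, whereas the paper only assumes pointwise convergence $\hat f^i_k(x)\to\hat f_k(x)$ yet still treats the corresponding bound as uniform in $k$.
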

\begin{proof}
Let $(\hat{f} ^i)=(\hat{f} ^i _k)\in N_{\beta\gamma} (t)\cap B^F [a,b]$ such that for each $k\in\mathbb{N}$ and $x\in[a,b]$, $\hat{f} ^i _k (x)\to \hat{f} _k(x)$ as $i\to\infty$. Write $\hat{f}=(\hat{f}_k)$. We have to show $\hat{f}\in N_{\beta\gamma} (t)\cap B^F [a,b]$.\\
Since $(\hat{f} ^i)=(\hat{f} ^i _k)\in N_{\beta\gamma} (t)$, so there exists a fuzzy functions $\hat{g} _i :[a,b]\to L(R)$ and an integer $n_1$ such that for all $i,j,n\geq n_1$ and for all $x\in[a,b]$
\begin{eqnarray*}
\frac{1}{T_{\beta\gamma(n)}} \sum\limits_{k\in[\beta_n,\gamma_n]} t_kd(\hat{f} ^i _k(x),\hat{g} _i(x))<\frac{\varepsilon}{3}
\end{eqnarray*}
Further as $(\hat{f} ^i _k)$ converges to $\hat{f} _k(x)$, so it is Cauchy i.e.
\begin{eqnarray*}
d(\hat{f} ^i _k (x),\hat{f} ^j _k (x))<\frac{\varepsilon}{3}
\end{eqnarray*}
holds. So for all $n\geq n_1$, we have
\begin{eqnarray*}
d(\hat{g} _i(x),\hat{g} _j(x)) &\leq& d(\hat{f} ^i _k(x),\hat{g} _i(x))+d(\hat{f} ^i _k(x),\hat{f} ^j _k(x))+d(\hat{f} ^j _k(x),\hat{g} _j(x))\\
\mbox{i.e. }\ \frac{1}{T_{\beta\gamma(n)}} \sum\limits_{k\in[\beta_n,\gamma_n]} t_kd(\hat{g} _i(x),\hat{g} _j(x)) &\leq& \frac{1}{T_{\beta\gamma(n)}} \sum\limits_{k\in[\beta_n,\gamma_n]}t_k\Big[d(\hat{f} ^i _k(x),\hat{g} _i(x))+\frac{\varepsilon}{3}\\ && +d(\hat{f} ^j _k(x),\hat{g} _j(x))\Big]\\
\mbox{i.e. }\  d(\hat{g} _i(x),\hat{g} _j(x)) &\leq& \frac{1}{T_{\beta\gamma(n)}} \sum\limits_{k\in[\beta_n,\gamma_n]}t_k d(\hat{f} ^i _k(x),\hat{g} _i(x))+ \frac{\varepsilon}{3}\\ && + \frac{1}{T_{\beta\gamma(n)}}\sum\limits_{k\in[\beta_n,\gamma_n]}t_kd(\hat{f} ^j _k(x),\hat{g} _j(x))\\
&<& \varepsilon
\end{eqnarray*}
Thus $(\hat{g}_i(x))$ is a Cauchy sequence in $L(R)$ and $L(R)$ is complete, so $(\hat{g}_i(x))$ is convergent, say $\hat{g}_i(x)\to \hat{g}(x)$ for each $x\in[a,b]$ as $i\to\infty$. \\
So there exists an integer $n_2$ such that for all $i,k,n\geq n_2$ and $\forall \ x\in[a,b]$,
\begin{eqnarray*}
d(\hat{g} _k (x),\hat{g} (x)) &<& \frac{\varepsilon}{4}\\
d(\hat{f} ^i _k (x),\hat{f} _k (x)) &<& \frac{\varepsilon}{4} \ \mbox{ (by assumption)}\\
\mbox{and } \frac{1}{T_{\beta\gamma(n)}} \sum\limits_{k\in[\beta_n,\gamma_n]} t_kd(\hat{f} ^i _k (x),\hat{g} _i(x)) &<& \frac{\varepsilon}{2} \ \mbox{ (by assumption)}
\end{eqnarray*}
holds. Then for all $i,k,n\geq p$ and for all $x\in[a,b]$,
\begin{eqnarray*}
d(\hat{f} _k (x),\hat{g} (x)) &\leq& d(\hat{f} ^i _k (x),\hat{f} _k (x))+d(\hat{f} ^i _k (x),\hat{g} _i (x))+d(\hat{g} _i (x),\hat{g} (x))\\
&<& \frac{\varepsilon}{2} + d(\hat{f} ^i _k (x),\hat{g} _i (x))\\
\mbox{i.e. } \frac{1}{T_{\beta\gamma(n)}} \sum\limits_{k\in[\beta_n,\gamma_n]} t_kd(\hat{f} _k (x),\hat{g} (x)) &\leq& \frac{\varepsilon}{2} + \frac{1}{T_{\beta\gamma(n)}} \sum\limits_{k\in[\beta_n,\gamma_n]} t_kd(\hat{f} ^i _k (x),\hat{g} _i (x))<\varepsilon
\end{eqnarray*}
So we can conclude now, $(\hat{f} _k)\in N_{\beta\gamma} (t)\cap B^F [a,b]$ i.e. $N_{\beta\gamma} (t)\cap B^F [a,b]$ is a closed subset of $B^F [a,b]$.
\end{proof}

\begin{remark}
The above theorem is not true in case of $\theta\neq1$. To prove this, consider the following example.\\
Let $t_n=1$ for all $n$. Take the following sequences of fuzzy functions.
\begin{eqnarray*}
f^{(n)} _k(x) &=& \begin{cases}
\bar{0}, & k\leq n \ \mbox{and}\ k \ \mbox{ is a square number}\\
\hat{f}(x), & \mbox{otherwise}
\end{cases}\\
f_k(x) &=& \begin{cases}
\bar{0}, & k \mbox{ is a square number}\\
\hat{f}(x), & \mbox{otherwise}
\end{cases}
\end{eqnarray*}
where $\hat{f}(x)$ is bounded on $[a,b]$. It is clear that $f^{(n)} _k(x)\to f_k(x)$ for each $x\in[a,b]$ and also $\Big(f^{(n)} _k(x)\Big)\in N_{\beta\gamma} ^{\theta} (t)$ to the function $f_k(x)$ for any choice $(\beta_n),(\gamma_n)$ and $\theta$. But from the example 3.1, it is clear that $\Big(f_k(x)\Big)\notin N_{\beta\gamma} ^{\theta} (t)$ for $\theta<1/2$.
\end{remark}

\subsection{Inclusion between weighted $\beta\gamma$ - pointwise statistical convergence and absolutely weighted $\beta\gamma$ - summability}
We know that summability methods eg. Cesaro, $\lambda$ - statistical, lacunary summability etc imply the corresponding type of statistical convergence in real or in fuzzy number systems. So it is interesting to know the behavior of the weighted $\beta\gamma$ - summability for the case of fuzzy functions of order $\theta$ with the corresponding type of weighted $\beta\gamma$ - statistical convergence of fuzzy functions of order $\theta$. To investigate the inclusions by our definitions, we assume
$$0<\theta_1\leq \theta_2\leq 1 \  \mbox{ and } \  \liminf\limits_{n\to\infty} t_n>0$$
Now we prove the following theorems.
\begin{theorem}
Let $\liminf\limits_{n\to\infty} \frac{\gamma_n}{T_{\beta\gamma(n)}}>1$ and $0< \beta_n\leq1$. Then if a sequence of fuzzy functions is absolutely weighted $\beta\gamma$ - summable of order $\theta_1$, it is also weighted $\beta\gamma$ - statistically convergent of order $\theta_2$ i.e. $N_{\beta\gamma} ^{\theta_1}(t)\subset SP_{\beta\gamma} ^{\theta_2}(t)$.
\end{theorem}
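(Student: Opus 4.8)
The plan is to exploit the elementary principle that if a weighted average of nonnegative terms is small, then only a small proportion of those terms can exceed a fixed threshold; the one genuine complication is that the counting set defining $SP_{\beta\gamma}^{\theta_2}(t)$ ranges over $k\le T_{\beta\gamma(n)}$, whereas the average defining $N_{\beta\gamma}^{\theta_1}(t)$ sums over $k\in[\beta_n,\gamma_n]$, so the two hypotheses serve precisely to reconcile these index ranges.

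First I would fix $x\in[a,b]$ and $\varepsilon>0$, assume $(\hat{f}_k)\in N_{\beta\gamma}^{\theta_1}(t)$, and abbreviate $d_k=d(\hat{f}_k(x),\hat{f}(x))$, so that $s_n(x)=\frac{1}{T_{\beta\gamma(n)}^{\theta_1}}\sum_{k\in[\beta_n,\gamma_n]}t_k d_k\to 0$. The key preliminary step is the set inclusion
\[
\{k\le T_{\beta\gamma(n)}:t_k d_k\ge\varepsilon\}\subseteq\{k\in[\beta_n,\gamma_n]:t_k d_k\ge\varepsilon\},
\]
valid for all large $n$. This is exactly where the hypotheses are used: $\liminf_n \gamma_n/T_{\beta\gamma(n)}>1$ forces $\gamma_n>T_{\beta\gamma(n)}$ eventually, while $0<\beta_n\le 1\le k$ shows $\beta_n\le k$ for every positive integer $k$; together they place every integer $k\le T_{\beta\gamma(n)}$ inside $[\beta_n,\gamma_n]$. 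In particular the cardinalities obey the same inequality.

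Next I would invoke the Markov-type estimate: since every summand is nonnegative and each term counted by the right-hand set is at least $\varepsilon$,
\[
\varepsilon\,\Big|\{k\in[\beta_n,\gamma_n]:t_k d_k\ge\varepsilon\}\Big|\le\sum_{k\in[\beta_n,\gamma_n]}t_k d_k .
\]
Combining this with the inclusion and dividing through by $\varepsilon\,T_{\beta\gamma(n)}^{\theta_2}$ yields
\[
\frac{1}{T_{\beta\gamma(n)}^{\theta_2}}\Big|\{k\le T_{\beta\gamma(n)}:t_k d_k\ge\varepsilon\}\Big|\le\frac{1}{\varepsilon\,T_{\beta\gamma(n)}^{\theta_2}}\sum_{k\in[\beta_n,\gamma_n]}t_k d_k .
\]

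Finally, because $T_{\beta\gamma(n)}\to\infty$ we have $T_{\beta\gamma(n)}\ge 1$ for large $n$, so $\theta_1\le\theta_2$ gives $T_{\beta\gamma(n)}^{\theta_1}\le T_{\beta\gamma(n)}^{\theta_2}$ and hence the right-hand side above is at most $\frac{1}{\varepsilon}s_n(x)$, which tends to $0$. As $\varepsilon>0$ and $x\in[a,b]$ were arbitrary, this is precisely $(\hat{f}_k)\in SP_{\beta\gamma}^{\theta_2}(t)$. I expect the index-range inclusion of the second step to be the only delicate point---it is where both hypotheses are consumed and where one must be careful that no term counted by the statistical density is dropped from the summability average; the remaining estimates are the routine ``average dominates density'' argument together with the monotonicity of $r\mapsto T^r$ for $T\ge1$.
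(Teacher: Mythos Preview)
Your proof is correct and follows essentially the same approach as the paper: the set inclusion $[1,T_{\beta\gamma(n)}]\subseteq[\beta_n,\gamma_n]$ extracted from the two hypotheses, followed by the Markov-type estimate and the monotonicity $T_{\beta\gamma(n)}^{\theta_1}\le T_{\beta\gamma(n)}^{\theta_2}$. The paper additionally supplies an explicit example showing the inclusion is strict, which you may wish to add if proper containment is intended.
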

\begin{proof}
Since $\liminf\limits_{n\to\infty} \frac{\gamma_n}{T_{\beta\gamma(n)}}>1$, so $\gamma_n>T_{\beta\gamma(n)}$ for all $n$. Also since $0< \beta_n\leq1$, so $$[1,T_{\beta\gamma(n)}]\subseteq[\beta_n,\gamma_n]$$
Let $(\hat{f}_k)= (\hat{f}_k(x))\in N_{\beta\gamma} ^{\theta_1}(t)$ and $\varepsilon>0$. Then for each $x\in[a,b]$
\begin{eqnarray*}
\sum\limits_{k\in[\beta_n,\gamma_n]} t_kd(\hat{f_k}(x),\hat{f}(x)) &\geq&  \sum_{\substack{k\in[\beta_n,\gamma_n]\\ t_kd(\hat{f_k}(x),\hat{f}(x))\geq \varepsilon}} t_kd(\hat{f_k}(x),\hat{f}(x))\\
&=& \varepsilon\Big|\big\{k\in[\beta_n,\gamma_n]:  t_kd(\hat{f_k}(x),\hat{f}(x))\geq \varepsilon\big\}\Big|\\
&\geq& \varepsilon\Big|\big\{k\leq T_{\beta\gamma(n)}:  t_kd(\hat{f_k}(x),\hat{f}(x))\geq \varepsilon\big\}\Big|
\end{eqnarray*}
Diving by $T_{\beta\gamma(n)} ^{\theta_1}$ and applying $\theta_1\leq \theta_2$, we get
\begin{eqnarray*}
\frac{1}{T_{\beta\gamma(n)} ^{\theta_1}} \sum\limits_{k\in[\beta_n,\gamma_n]} t_kd(\hat{f_k}(x),\hat{f}(x))
\geq \frac{\varepsilon}{T_{\beta\gamma(n)} ^{\theta_2}} \Big|\big\{k\leq T_{\beta\gamma(n)}:  t_kd(\hat{f_k}(x),\hat{f}(x))\geq \varepsilon\big\}\Big|
\end{eqnarray*}
Now taking $n\to\infty$ in the above inequality we get $(\hat{f}_k)\in SP_{\beta\gamma} ^{\theta_2}(t)$ i.e. $N_{\beta\gamma} ^{\theta_1}(t)\subset SP_{\beta\gamma} ^{\theta_2}(t)$.\\
To prove the strictness of the above inclusion, consider the example.
\begin{example}
Let $\beta_n=1$, $\gamma_n=n^2$ and $t_n=\frac{1}{5}$. Then $T_{\beta\gamma(n)}=\frac{n^2}{5}$ and hence $\frac{\gamma_n}{T_{\beta\gamma(n)}}=5>1$.\\
Let $(\hat{f}_k):[a,b]\to L(R)$ be a sequence of fuzzy functions defined as
$$\hat{f}_k(x)=\begin{cases}
\begin{cases}
\frac{t}{xk}+1, & -xk\leq t\leq0\\
1-\frac{t}{xk}, & 0\leq t\leq xk
\end{cases}, & \mbox{ when $k$ is a square number}\\
\bar{0}, & \mbox{otherwise}
\end{cases}$$
and so $$t_kd(\hat{f}_k(x),\bar{0})=\begin{cases}
\frac{kx}{5}, & k \mbox{ is a square number}\\
0, & \mbox{otherwise}
\end{cases}$$
Take $\theta_1=\frac{1}{4}$ and $\theta_2=1$. Then for any pre-assigned positive $\varepsilon>0$,
\begin{eqnarray*}
\frac{1}{T_{\beta\gamma(n)} ^{\theta_2}} \Big|\big\{k\leq \frac{n^2}{5}:  t_kd(\hat{f_k}(x),\bar{0})\geq \varepsilon\big\}\Big| &=& \frac{[\sqrt{n^2/5}]}{n^2/5}\\
&\leq& \frac{1}{\sqrt{n^2/5}}=\frac{\sqrt{5}}{n}\to 0\ \mbox{as } n\to\infty
\end{eqnarray*}
So $(\hat{f}_k)\in SP_{\beta\gamma} ^{\theta_2}(t)$, but
\begin{eqnarray*}
\frac{1}{T_{\beta\gamma(n)} ^{\theta_1}} \sum\limits_{k\in[\beta_n,\gamma_n]} t_kd(\hat{f_k}(x),\bar{0}) &=& \frac{1}{(n^2/5)^{1/4}} \sum_{\substack{k\in[1,n^2]\\ k=p^2\ \mbox{for } p\in\mathbb{N}}} \frac{kx}{5}\\
&=& \frac{x}{5^{3/4}} \frac{1}{\sqrt{n}} [1+4+9+\cdots+n^2]\to\infty \ \mbox{as } n\to\infty
\end{eqnarray*}
i.e. $(\hat{f}_k)\notin N_{\beta\gamma} ^{\theta_1}(t)$.
\end{example}
\end{proof}

\begin{theorem}
If $(t_n)$ be a bounded sequence of real numbers such that $\limsup\limits_{n} \frac{\gamma_n}{T_{\beta\gamma(n)} ^{\theta_2}}<\infty$, then $SP_{\beta\gamma} ^{\theta_1}(t)\cap B^F[a,b]\subset N_{\beta\gamma} ^{\theta_2}(t)$ i.e. if a sequence of bounded fuzzy functions which is  weighted $\beta\gamma$ - statistically convergent of order $\theta_1$, then it is absolutely weighted $\beta\gamma$ - summable of order $\theta_2$.
\end{theorem}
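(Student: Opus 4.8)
The plan is to run the classical argument that a bounded statistically convergent sequence is strongly summable, now transported to the fuzzy-function setting and to the two orders $\theta_1\le\theta_2$. First I would record three preliminary facts drawn from the hypotheses. Since $(t_n)$ is bounded, say $t_k\le H$, and $(\hat f_k),\hat f$ are bounded fuzzy functions, properties (i)--(iii) of the metric $d$ yield a uniform bound $t_k\,d(\hat f_k(x),\hat f(x))\le G$ for all $k$ and all $x\in[a,b]$. Next, because $T_{\beta\gamma(n)}\to\infty$ we have $T_{\beta\gamma(n)}\ge 1$ for all large $n$, so $\theta_1\le\theta_2$ forces $T_{\beta\gamma(n)}^{\theta_1}\le T_{\beta\gamma(n)}^{\theta_2}$ eventually; this monotonicity is precisely what converts an order-$\theta_1$ density estimate into an order-$\theta_2$ conclusion. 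Finally, $\limsup_n \gamma_n/T_{\beta\gamma(n)}^{\theta_2}<\infty$ furnishes a constant $K$ with $\gamma_n/T_{\beta\gamma(n)}^{\theta_2}\le K$ for all large $n$.

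Fixing $x\in[a,b]$ and $\varepsilon>0$, I would split the defining sum of $s_n(x)$ at the threshold $\varepsilon$:
\begin{equation*}
s_n(x)=\frac{1}{T_{\beta\gamma(n)}^{\theta_2}}\!\!\sum_{\substack{k\in[\beta_n,\gamma_n]\\ t_kd(\hat f_k(x),\hat f(x))\ge\varepsilon}}\!\! t_kd(\hat f_k(x),\hat f(x))\;+\;\frac{1}{T_{\beta\gamma(n)}^{\theta_2}}\!\!\sum_{\substack{k\in[\beta_n,\gamma_n]\\ t_kd(\hat f_k(x),\hat f(x))<\varepsilon}}\!\! t_kd(\hat f_k(x),\hat f(x)).
\end{equation*}
For the second (small) block I would bound every summand by $\varepsilon$ and the number of indices $k\in[\beta_n,\gamma_n]$ by $\gamma_n$, so this block does not exceed $\varepsilon\,\gamma_n/T_{\beta\gamma(n)}^{\theta_2}\le K\varepsilon$. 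For the first (large) block I would bound every summand by $G$, leaving $\frac{G}{T_{\beta\gamma(n)}^{\theta_2}}\big|\{k\in[\beta_n,\gamma_n]:t_kd(\hat f_k(x),\hat f(x))\ge\varepsilon\}\big|$; I would then compare this cardinality against the density set appearing in $SP_{\beta\gamma}^{\theta_1}(t)$ and invoke the monotonicity $T_{\beta\gamma(n)}^{\theta_1}\le T_{\beta\gamma(n)}^{\theta_2}$, so that the order-$\theta_1$ statistical convergence of $(\hat f_k)$ sends this block to $0$.

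Combining the two estimates gives $\limsup_{n\to\infty}s_n(x)\le K\varepsilon$ for each $x\in[a,b]$ and each $\varepsilon>0$, whence $s_n(x)\to 0$ pointwise and $(\hat f_k)\in N_{\beta\gamma}^{\theta_2}(t)$, which is the assertion. The step I expect to be the genuine obstacle is the large block, and specifically the reconciliation of index ranges it demands: the summability sum runs over $k\in[\beta_n,\gamma_n]$, whereas the density in $SP_{\beta\gamma}^{\theta_1}(t)$ is counted over $k\le T_{\beta\gamma(n)}$. To dominate $\big|\{k\in[\beta_n,\gamma_n]:t_kd\ge\varepsilon\}\big|$ by $\big|\{k\le T_{\beta\gamma(n)}:t_kd\ge\varepsilon\}\big|$ one needs a containment $[\beta_n,\gamma_n]\subseteq[1,T_{\beta\gamma(n)}]$, the mirror image of the inclusion $[1,T_{\beta\gamma(n)}]\subseteq[\beta_n,\gamma_n]$ exploited in the preceding theorem, and it is the normalization $\beta_n\ge 1$ together with the ratio hypothesis that must be pressed into service to secure it; once the two counts are aligned, the boundedness of $(t_n)$ and of the fuzzy functions, combined with $\theta_1\le\theta_2$, make the remaining estimates routine.
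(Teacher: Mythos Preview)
Your proof follows essentially the same path as the paper's: split the sum over $[\beta_n,\gamma_n]$ at the threshold $\varepsilon$, bound the large block by the uniform bound (your $G$, the paper's $M_2$) times the cardinality of the exceptional set, bound the small block by $\varepsilon(\gamma_n+1)/T_{\beta\gamma(n)}^{\theta_2}$ via $\limsup_n\gamma_n/T_{\beta\gamma(n)}^{\theta_2}<\infty$, and use $\theta_1\le\theta_2$ to switch the normalizing exponent. The paper handles the index-range reconciliation you single out by defining $A_n=\{k\le T_{\beta\gamma(n)}:t_kd\ge\varepsilon\}$ from the start and partitioning $[\beta_n,\gamma_n]$ into $[\beta_n,\gamma_n]\cap A_n$ and $[\beta_n,\gamma_n]\cap A_n^c$, so that $|[\beta_n,\gamma_n]\cap A_n|\le|A_n|$ is immediate; this merely shifts the same containment issue to the $A_n^c$-block, where the paper glosses over it, so in substance your argument and the paper's coincide and your caution about that step is well placed.
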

\begin{proof}
Let $(\hat{f_k}(x))\in SP_{\beta\gamma} ^{\theta_1}(t)\cap B^F[a,b]$ and since $\limsup\limits_{n} \frac{\gamma_n}{T_{\beta\gamma(n)} ^{\theta_2}}<\infty$, so for some positive $M_1,M_2>0$ such that
\begin{eqnarray*}
\frac{\gamma_n}{T_{\beta\gamma(n)} ^{\theta_2}} &<& M_1\\
\mbox{and }\ t_kd(\hat{f_k}(x),\hat{f}(x)) &<& M_2
\end{eqnarray*}
Let $\varepsilon>0$ be given. For every $x\in[a,b]$ consider the set $$A_n=\{k\leq T_{\beta\gamma(n)}:  t_kd(\hat{f_k}(x),\hat{f}(x))\geq \varepsilon\}$$
Then we have
\begin{eqnarray*}
\sum\limits_{k\in[\beta_n,\gamma_n]} t_kd(\hat{f_k}(x),\hat{f}(x)) &=& \sum\limits_{k\in[\beta_n,\gamma_n]\cap A_n} t_kd(\hat{f_k}(x),\hat{f}(x))\\ && +\sum\limits_{k\in[\beta_n,\gamma_n]\cap A_n ^c} t_kd(\hat{f_k}(x),\hat{f}(x))\\
&\leq& M_2 \Big|\big\{k\leq T_{\beta\gamma(n)}:  t_kd(\hat{f_k}(x),\hat{f}(x))\geq \varepsilon\big\}\Big|\\ && +(\gamma_n+1)\varepsilon\\
\mbox{i.e. } \frac{1}{T_{\beta\gamma(n)} ^{\theta_2}}\sum\limits_{k\in[\beta_n,\gamma_n]} t_kd(\hat{f_k}(x),\hat{f}(x)) &\leq& \frac{M_2}{T_{\beta\gamma(n)} ^{\theta_1}} \Big|\big\{k\leq T_{\beta\gamma(n)}:  t_kd(\hat{f_k}(x),\hat{f}(x))\geq \varepsilon\big\}\Big|\\ && +\Big(M_1+\frac{1}{T_{\beta\gamma(n)} ^{\theta_2}}\Big)\varepsilon
\end{eqnarray*}
Since $\varepsilon>0$ is arbitrary, so the result follows.\\
To show that the inclusion is strict, consider the following example.
\begin{example}
Let $\beta_n=1$, $\gamma_n=n^2$ and $t_n=1+\frac{1}{n}$. Then $\liminf_n t_n=1>0$ and $t_n\leq 2$ for all $n$. Also $T_{\beta\gamma(n)}=n^2+(1+\frac{1}{2}+\cdots+\frac{1}{n})$. Let $\theta_2=1$.
$$\limsup_n \frac{\gamma_n}{T_{\beta\gamma(n)} ^{\theta_2}}=\limsup_n\frac{n^2}{n^2+(1+\frac{1}{2}+\cdots+\frac{1}{n})}\leq 1$$
Define a sequence of fuzzy functions $(\hat{f}_k)$ as
$$\hat{f}_k(x)=\begin{cases}
\begin{cases}
1+\frac{tk}{x}, & -\frac{x}{k}\leq t\leq 0\\
1-\frac{tk}{x}, & 0\leq t\leq \frac{x}{k}
\end{cases}, & k=p^3\ \mbox{for some }p\in\mathbb{N}\\
\bar{0}, & \mbox{otherwise}
\end{cases}$$ and so
$$t_kd(\hat{f}_k(x),\bar{0})=\begin{cases}
\frac{x}{k}+\frac{x}{k^2}, & k=p^3\ \mbox{for some }p\in\mathbb{N}\\
0, & \mbox{otherwise}
\end{cases}$$
\begin{eqnarray*}
\frac{1}{T_{\beta\gamma(n)} ^{\theta_2}}\sum\limits_{k\in[\beta_n,\gamma_n]} t_kd(\hat{f_k}(x),\hat{f}(x)) &=& \frac{1}{n^2+(1+\frac{1}{2}+\cdots+\frac{1}{n})} \sum_{\substack{k\in[1,n^2]\\ k=p^3}} \Big(\frac{x}{k}+\frac{x}{k^2}\Big)\\
&\leq& \frac{x}{n^2} \Big(1+\frac{1}{8}+\frac{1}{27}+\cdots+\frac{1}{[n^{2/3}]}\Big)
\\ && +\frac{x}{n^2}\Big(1+\frac{1}{64}+\cdots+\frac{1}{[n^{4/3}]}\Big)\to0
\end{eqnarray*}
as $n\to\infty$. So, $(\hat{f}_k)\in N_{\beta\gamma} ^{\theta_2}(t)$. Now
\begin{eqnarray*}
\frac{1}{T_{\beta\gamma(n)} ^{\theta_1}}\Big|\big\{k\leq T_{\beta\gamma(n)}:  t_kd(\hat{f_k}(x),\bar{0})\geq \varepsilon\big\}\Big|
&=& \frac{[T_{\beta\gamma(n)} ^{1/3}]}{T_{\beta\gamma(n)} ^{\theta_1}}\\
&\geq& \frac{T_{\beta\gamma(n)} ^{1/3}-1}{T_{\beta\gamma(n)} ^{\theta_1}}
\end{eqnarray*}
choosing $\theta_1<1/3$, it is easy to see that R.H.S tends to $\infty$ as $n\to\infty$. So $(\hat{f}_k)\notin SP_{\beta\gamma} ^{\theta_2}(t)$.
\end{example}
\end{proof}

\begin{remark}
From the above two theorems it is clear that $SP_{\beta\gamma} ^{\theta}(t)$ and $N_{\beta\gamma} ^{\theta}(t)$ are incomaprable class of sequences of fuzzy functions i.e. neither of them is the subset of the other.
\end{remark}

\section{Ordinary weighted $\beta\gamma$-summability of order $\theta$}
Now we define a more general class of summability which is defined as follows:
\begin{definition}
A sequence $(\hat{f_k}(x))$ of fuzzy functions is said to be {\bf ordinary weighted $\beta\gamma$ - summable} to a fuzzy function $\hat{f}$ of order $\theta$ if for every $x\in[a,b]$,
\begin{equation*}
\hat{S}_{\beta\gamma(n)}(x)=\frac{1}{T_{\beta\gamma(n)} ^{\theta}}\sum\limits_{k\in[\beta_n,\gamma_n]} t_k\hat{f}_k(x)\to \hat{f}(x) \ \mbox{ as } n\to\infty
\end{equation*}
i.e. $d\Big(\hat{S}_{\beta\gamma(n)}(x),\hat{f}(x)\Big)\to 0$ as $n\to\infty$ for each $x\in[a,b]$. We denote this convergence as $\bar{N}_{\beta\gamma} (t)$.
\end{definition}

\begin{remark}
As particular cases, we get
\begin{enumerate}
\item For $t_n=1, \beta_n=1,\gamma_n=n$, $\bar{N}_{\beta\gamma} (t)$ is the Cesaro summability discussed in \cite{TC}.
\item For $\beta_n=1,\gamma_n=n$, $\bar{N}_{\beta\gamma} (t)$ reduces to Riesz mean as introduced in \cite{Tr}.
\end{enumerate}
\end{remark}

\begin{theorem}
Let $\hat{f}_k:[a,b]\to L(R)$ and $\hat{g}_k:[a,b]\to L(R)$ be two sequence of fuzzy functions. If $(\hat{f}_k),(\hat{g}_k)\in \bar{N}_{\beta\gamma} (t)$, then $(\hat{f}_k+\hat{g}_k),(c\hat{f}_k)\in \bar{N}_{\beta\gamma} (t)$ for $c\in\mathbb{R}$.
\end{theorem}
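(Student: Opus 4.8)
The plan is to reduce both assertions to the two defining convergences, namely $d\bigl(\hat{S}^{f}_{\beta\gamma(n)}(x),\hat{f}(x)\bigr)\to 0$ and $d\bigl(\hat{S}^{g}_{\beta\gamma(n)}(x),\hat{g}(x)\bigr)\to 0$, where I write $\hat{S}^{f}_{\beta\gamma(n)}$, $\hat{S}^{g}_{\beta\gamma(n)}$ for the weighted averages attached to $(\hat{f}_k)$ and $(\hat{g}_k)$ and $\hat{f},\hat{g}$ for their respective limits. The argument then rests on the linearity of the weighted fuzzy sum together with the three metric properties (i)--(iii) of $d$ recorded in Section~2. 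Unlike Theorem~3.1(i), where the averaging acts on the scalar distances $d(\hat{f}_k(x),\hat{f}(x))$, here the average is taken of the fuzzy numbers themselves, so the first thing I would verify is that the finite weighted sum of fuzzy numbers splits additively and that scalars may be pulled out.

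For the sum, I would write $\hat{S}^{f+g}_{\beta\gamma(n)}(x)=\frac{1}{T_{\beta\gamma(n)}^{\theta}}\sum_{k\in[\beta_n,\gamma_n]} t_k\bigl(\hat{f}_k(x)+\hat{g}_k(x)\bigr)$ and, using distributivity of scalar multiplication over fuzzy addition, rewrite it as $\hat{S}^{f}_{\beta\gamma(n)}(x)+\hat{S}^{g}_{\beta\gamma(n)}(x)$. Applying metric property (iii) yields
\[
d\bigl(\hat{S}^{f+g}_{\beta\gamma(n)}(x),\,\hat{f}(x)+\hat{g}(x)\bigr)\leq d\bigl(\hat{S}^{f}_{\beta\gamma(n)}(x),\hat{f}(x)\bigr)+d\bigl(\hat{S}^{g}_{\beta\gamma(n)}(x),\hat{g}(x)\bigr).
\]
Both terms on the right tend to $0$ by hypothesis, so the left side does as well for each $x\in[a,b]$, giving $(\hat{f}_k+\hat{g}_k)\in\bar{N}_{\beta\gamma}(t)$ with limit $\hat{f}+\hat{g}$. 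For the scalar multiple I would factor $c$ through the weighted sum to get $\hat{S}^{cf}_{\beta\gamma(n)}(x)=c\,\hat{S}^{f}_{\beta\gamma(n)}(x)$ and then invoke homogeneity (i):
\[
d\bigl(\hat{S}^{cf}_{\beta\gamma(n)}(x),\,c\hat{f}(x)\bigr)=|c|\,d\bigl(\hat{S}^{f}_{\beta\gamma(n)}(x),\hat{f}(x)\bigr)\to 0,
\]
which shows $(c\hat{f}_k)\in\bar{N}_{\beta\gamma}(t)$ with limit $c\hat{f}$ (the case $c=0$ being trivial).

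The computation is essentially routine; the only point demanding care --- and the one I would flag as the main obstacle --- is justifying the algebraic manipulation of the fuzzy-valued average, since the operations now live in $L(R)$ rather than acting on real numbers. Fuzzy addition and scalar multiplication are defined through $\alpha$-cuts, so the identities $t_k(\hat{f}_k(x)+\hat{g}_k(x))=t_k\hat{f}_k(x)+t_k\hat{g}_k(x)$, the splitting of the finite sum, and $c\sum_k t_k\hat{f}_k(x)=\sum_k (c\,t_k)\hat{f}_k(x)$ all hold at the level of the endpoints $(\cdot)^{-}_{\alpha},(\cdot)^{+}_{\alpha}$ and hence for the fuzzy numbers themselves, for every real $c$. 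Once these are granted, properties (i) and (iii) of the metric close the argument, and notably the order $\theta$ never intervenes beyond the common normalizing factor $T_{\beta\gamma(n)}^{\theta}$ shared by all three averages.
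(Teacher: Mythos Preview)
Your proposal is correct and is exactly the natural argument the paper has in mind; the paper itself omits the proof entirely, writing only ``Proof is easy. So we omit it.'' Your use of the linearity of the fuzzy weighted sum together with the metric properties (i) and (iii) is the standard route, and your remark that the order $\theta$ plays no role beyond the common normalization is apt.
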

\begin{proof}
Proof is easy. So we omit it.
\end{proof}

\begin{theorem}
If a sequence of fuzzy functions is absolutely weighted $\beta\gamma$-summable, then it is weighted $\beta\gamma$ summable i.e. $N_{\beta\gamma} (t)\subset \bar{N}_{\beta\gamma} (t)$.
\end{theorem}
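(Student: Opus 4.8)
The plan is to exploit that, for $\theta=1$, the normalized weights sum to one: since $T_{\beta\gamma(n)}=\sum_{k\in[\beta_n,\gamma_n]}t_k$, we have $\frac{1}{T_{\beta\gamma(n)}}\sum_{k\in[\beta_n,\gamma_n]}t_k=1$. The key observation is that the limit function $\hat{f}(x)$ can therefore be rewritten as its own weighted average $\frac{1}{T_{\beta\gamma(n)}}\sum_{k\in[\beta_n,\gamma_n]}t_k\hat{f}(x)$. This brings $\hat{S}_{\beta\gamma(n)}(x)$ and $\hat{f}(x)$ into a single metric comparison whose terms line up index by index.

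First I would establish, by induction on the number of summands using the triangle-type property (iii) of $d$, the finite subadditivity estimate
\begin{equation*}
d\Big(\sum_{k\in[\beta_n,\gamma_n]}t_k\hat{f}_k(x),\ \sum_{k\in[\beta_n,\gamma_n]}t_k\hat{f}(x)\Big)\leq\sum_{k\in[\beta_n,\gamma_n]}d\big(t_k\hat{f}_k(x),\ t_k\hat{f}(x)\big).
\end{equation*}
Applying property (i) with the positive scalars $c=t_k$ turns each right-hand summand into $t_k\,d(\hat{f}_k(x),\hat{f}(x))$.

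Next I would pull the normalizing factor out of the metric using property (i) once more, now with $c=1/T_{\beta\gamma(n)}>0$, to obtain
\begin{equation*}
d\big(\hat{S}_{\beta\gamma(n)}(x),\hat{f}(x)\big)=\frac{1}{T_{\beta\gamma(n)}}\,d\Big(\sum_{k\in[\beta_n,\gamma_n]}t_k\hat{f}_k(x),\ \sum_{k\in[\beta_n,\gamma_n]}t_k\hat{f}(x)\Big)\leq s_n(x).
\end{equation*}
Because $(\hat{f}_k)\in N_{\beta\gamma}(t)$ means precisely that $s_n(x)\to0$ for every $x\in[a,b]$, a squeeze gives $d(\hat{S}_{\beta\gamma(n)}(x),\hat{f}(x))\to0$, which is exactly the statement $(\hat{f}_k)\in\bar{N}_{\beta\gamma}(t)$.

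I do not expect any real obstacle: the only genuine ingredient is extending property (iii) from two summands to a finite sum of fuzzy numbers, which is a routine induction, while the positivity of the $t_k$ is what lets property (i) be applied without absolute-value bookkeeping. It is worth noting that the argument uses $\theta=1$ in an essential way, since only then do the normalized weights sum to one and permit the rewriting of $\hat{f}(x)$ as a weighted average; this indicates why the inclusion is stated at order one rather than for general $\theta$.
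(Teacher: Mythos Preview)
Your proposal is correct and follows essentially the same route as the paper: rewrite $\hat{f}(x)$ as the weighted average $\frac{1}{T_{\beta\gamma(n)}}\sum_{k\in[\beta_n,\gamma_n]}t_k\hat{f}(x)$ (using $\theta=1$), then apply homogeneity (property~(i)) and the triangle-type inequality (property~(iii)) termwise to obtain $d(\hat{S}_{\beta\gamma(n)}(x),\hat{f}(x))\leq s_n(x)\to 0$. The only addition in the paper is an explicit example showing that the inclusion is strict, which your proposal does not address.
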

\begin{proof}
Let $\hat{f}_k(x)$ be a sequence of fuzzy functions such that $\Big(\hat{f}_k(x)\Big)\in N_{\beta\gamma} (t)$ for a fuzzy function $\hat{f}(x)$. Now
\begin{eqnarray*}
d(\hat{S}_{\beta\gamma(n)}(x),\hat{f}(x)) &=& d\Big(\frac{1}{T_{\beta\gamma(n)}}\sum\limits_{k\in[\beta_n,\gamma_n]} t_k\hat{f}_k(x),\hat{f}(x)\Big)\\
&=& d\Big(\frac{1}{T_{\beta\gamma(n)}}\sum\limits_{k\in[\beta_n,\gamma_n]} t_k\hat{f}_k(x),\frac{1}{T_{\beta\gamma(n)}}\sum\limits_{k\in[\beta_n,\gamma_n]} t_k \hat{f}(x)\Big)\\
&\leq& \frac{1}{T_{\beta\gamma(n)}}\sum\limits_{k\in[\beta_n,\gamma_n]} t_k d(\hat{f}_k(x),\hat{f}(x))
\end{eqnarray*}
So taking $n\to\infty$ in both sides, we get the inclusion.\\ To show the inclusion is strict, we give the following example.
\begin{example}
Let $t_n=1,\beta_n=1,\gamma_n=n$ and the sequence of fuzzy functions be defined for any $x\in[a,b]$ as
$$\hat{f}_k (x)=\bar{a}_k \ \mbox{ where } a_k=(-1)^{k+1}$$
Then we have
\begin{eqnarray*}
\hat{S}_{\beta\gamma(n)}(x) &=& \frac{1}{n}\sum\limits_{k=1} ^n \overline{(-1)^{k+1}} \\
&=& \begin{cases}
\bar{0}, & \mbox{ if $n$ is even}\\
\frac{1}{n}\bar{1}, & \mbox{ if $n$ is odd}
\end{cases}\\
&\to& \bar{0} \mbox{ as } n\to\infty
\end{eqnarray*}
Then $(\hat{f}_k (x))\in \bar{N}_{\beta\gamma} (t)$. But
\begin{eqnarray*}
\frac{1}{n}\sum\limits_{k\in[1,n]} d(\hat{f}_k(x),\bar{0}) &=& \frac{1}{n}\sum\limits_{k\in[1,n]} 1\\
&=& 1\\
\mbox{i.e. } (\hat{f}_k (x)) &\notin& N_{\beta\gamma} (t)
\end{eqnarray*}
This completes the proof.
\end{example}
\end{proof}

\vspace{1cm}
\noindent Now we will give a generalized version of Tauberian theorem, given by \cite{O}. First we note the following definition.
\begin{definition}\cite{TB}
A sequence of fuzzy numbers $(\hat{u}_n)$ is said to be slowly decreasing if for every $\varepsilon>0$, there exists $n_0=n_0(\varepsilon)$ and $\lambda=\lambda(\varepsilon)>1$ such that for all $n>n_0$
\begin{equation*}
\hat{u}_k \succeq \hat{u}_n-\bar{\varepsilon} \mbox{ whenever } n<k\leq [\lambda n]
\end{equation*}
or equivalently, if for every $\varepsilon>0$, there exists $n_0=n_0(\varepsilon)$ and $\lambda=\lambda(\varepsilon)<1$ such that for all $n>n_0$
\begin{equation*}
\hat{u}_n \succeq \hat{u}_k-\bar{\varepsilon} \mbox{ whenever } [\lambda n]< k\leq n
\end{equation*}
\end{definition}

\noindent Now on the basis of the above definition, we prove our results.

\noindent First we assume that $(\gamma_n)$ be a real sequence taking integer values only and we use the notation $([\lambda\gamma])=([\lambda\gamma_n])$.

\begin{lemma}
For the sequence $T_{\beta\gamma}(n)$, defined above, the following conditions
\begin{eqnarray}
\liminf_{n\to\infty} \frac{T_{\beta([\lambda\gamma])(n)}}{T_{\beta\gamma(n)}} &>& 1 \mbox{ for every } \lambda>1\\
\mbox{and } \liminf_{n\to\infty} \frac{T_{\beta\gamma(n)}}{T_{\beta([\lambda\gamma])(n)}} &>& 1 \mbox{ for every } 0<\lambda<1
\end{eqnarray}
are equivalent.
\end{lemma}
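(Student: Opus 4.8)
The plan is to treat the two displayed conditions as mirror images of one another under the substitution $\lambda\mapsto 1/\lambda$, exploiting that, for each fixed $n$, $T_{\beta\gamma(n)}$ is nondecreasing as a function of its upper summation limit (since every $t_k>0$). Because of this monotonicity, both ratios in the statement are $\geq 1$ (in each case the numerator's upper limit dominates the denominator's), so each condition asserts only that the corresponding $\liminf$ is strictly larger than $1$. First I would record the elementary two-sided estimate for the greatest-integer function: for $\lambda>1$ and $\mu=1/\lambda$ one has $\gamma_n-(\lambda+1)\leq \big[\lambda\,[\mu\gamma_n]\big]\leq \gamma_n$, so that $\big[\lambda\,[\mu\gamma_n]\big]$ and $\gamma_n$ agree up to a bounded number of indices.

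To prove that the first condition implies the second, fix $0<\mu<1$ and set $\lambda=1/\mu>1$. Writing $m_n=[\mu\gamma_n]$, the aim is to read $\gamma_n$ as essentially $[\lambda m_n]$ and thereby transfer the hypothesis, phrased at scale $\gamma_n$, down to the scale $m_n$. Concretely I would compare $T_{\beta\gamma(n)}$ with $T_{\beta([\lambda m])(n)}$ via monotonicity, using the estimate above and, if necessary, replacing $\lambda$ by a slightly larger $\lambda'$ so that $[\lambda' m_n]\geq\gamma_n$ for all large $n$ (legitimate, since the first condition is assumed for \emph{every} $\lambda>1$). I would then invoke the first condition to bound $T_{\beta([\lambda m])(n)}$ below by $(1+\delta)\,T_{\beta([\mu\gamma])(n)}$ for some $\delta>0$ and all large $n$. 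Combining the two estimates gives $T_{\beta\gamma(n)}\geq(1+\delta')\,T_{\beta([\mu\gamma])(n)}$ eventually, which is exactly $\liminf_n T_{\beta\gamma(n)}/T_{\beta([\mu\gamma])(n)}>1$.

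The reverse implication is entirely symmetric: fix $\lambda>1$, put $\mu=1/\lambda$, and apply the same rounding estimate to $\big[\mu\,[\lambda\gamma_n]\big]$, which again lies within a bounded distance of $\gamma_n$; monotonicity together with the second condition (now read at scale $[\lambda\gamma_n]$) yields $T_{\beta([\lambda\gamma])(n)}\geq(1+\delta)\,T_{\beta\gamma(n)}$ eventually. I expect the main obstacle to be precisely this passage between the two scales across the greatest-integer function: one must ensure that the bounded index discrepancy between $\big[\lambda\,[\mu\gamma_n]\big]$ and $\gamma_n$ does not destroy the multiplicative gap, and that the hypothesis—stated for the distinguished base $\gamma_n$—can be applied at the shifted base $[\mu\gamma_n]$. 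The monotonicity of $T_{\beta\gamma(n)}$ in its upper limit, the freedom to enlarge $\lambda$, and the fact that the conditions are assumed for every admissible $\lambda$ are what make this transfer go through; the remaining steps are routine arithmetic with the $\liminf$.
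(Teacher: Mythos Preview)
Your approach is essentially the paper's: both arguments rest on the monotonicity of $T_{\beta(\cdot)(n)}$ in its upper summation limit together with the substitution $m_n=[\mu\gamma_n]$, $\lambda=1/\mu$, and then invoke the hypothesis at the shifted base $m_n$. One small correction: in the implication (2)$\Rightarrow$(3) the enlargement of $\lambda$ you suggest points the wrong way and is in fact unnecessary there, since $[\lambda m_n]=\big[[\mu\gamma_n]/\mu\big]\le\gamma_n$ already gives $T_{\beta\gamma(n)}\ge T_{\beta([\lambda m])(n)}$ directly; the paper reserves the small shift (their $\lambda_1$ with $1<\lambda_1<\lambda$) for the reverse implication, where one needs $[m/\lambda_1]\ge\gamma_n$. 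The concern you flag about applying the hypothesis at the shifted base $m_n$ rather than at $\gamma_n$ is apt: the paper performs precisely this step without further comment.
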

\begin{proof}
Let (2) holds. Then consider two cases : $0<\lambda<1$ and $\lambda>1$.\\
Case (i) : Let $0<\lambda<1$.\\
Set $m=[\lambda\gamma_n]$. Then $\frac{m}{\lambda}=\frac{[\lambda\gamma_n]}{\lambda}\leq \gamma_n$ and thus
\begin{eqnarray*}
T_{\beta([m/\lambda])(n)} &\leq& T_{\beta\gamma(n)}\\
\mbox{and so } \ \frac{T_{\beta\gamma(n)}}{T_{\beta([\lambda\gamma_n])(n)}} &\geq& \frac{T_{\beta([m/\lambda])(n)}}{T_{\beta m(n)}}
\end{eqnarray*}
As the denominators are same and thus
\begin{eqnarray*}
\liminf_{n\to\infty} \frac{T_{\beta\gamma(n)}}{T_{\beta([\lambda\gamma_n])(n)}} &\geq& \liminf_{n\to\infty} \frac{T_{\beta([m/\lambda])(n)}}{T_{\beta m(n)}}>1
\end{eqnarray*}
since $1/\lambda>1$.\\
Case (ii) : Let $\lambda>1$.\\
Choose $\lambda_1$ be such that $1<\lambda_1<\lambda$ and let $m=[\lambda\gamma_n]$. Thus
\begin{eqnarray*}
\gamma_n\leq \frac{[\lambda\gamma_n]-1}{\lambda_1}<\frac{[\lambda\gamma_n]}{\lambda_1}=\frac{m}{\lambda_1}
\end{eqnarray*}
provided $\lambda_1\leq \lambda-1/\gamma_n$, which is the if $n$ is large enough. For such $n$, we have
\begin{eqnarray*}
T_{\beta\gamma(n)} &\leq& T_{\beta([m/\lambda_1])(n)}\\
\mbox{i.e. } \ \frac{T_{\beta([\lambda\gamma_n])(n)}}{T_{\beta\gamma(n)}} &\geq& \frac{T_{\beta m(n)}}{T_{\beta([m/\lambda_1])(n)}}
\end{eqnarray*}
As the numerators are same, so we get
\begin{eqnarray*}
\liminf_{n\to\infty} \frac{T_{\beta([\lambda\gamma_n])(n)}}{T_{\beta\gamma(n)}} &\geq& \liminf_{n\to\infty} \frac{T_{\beta m(n)}}{T_{\beta([m/\lambda_1])(n)}}>1
\end{eqnarray*}
This completes the proof.
\end{proof}

\begin{lemma}
If $\big(T_{\beta\gamma}(n)\big)$ satisfies (2) or (3), then the following conditions
\begin{eqnarray}
\limsup_{n\to\infty} \frac{T_{\beta([\lambda\gamma])(n)}}{T_{\beta([\lambda\gamma])(n)} - T_{\beta\gamma(n)}} &<& \infty \mbox{ for every } \lambda>1\\
\mbox{and } \limsup_{n\to\infty} \frac{T_{\beta([\lambda\gamma])(n)}}{ T_{\beta\gamma(n)}-T_{\beta([\lambda\gamma])(n)}} &<& \infty \mbox{ for every } 0<\lambda<1
\end{eqnarray}
holds accordingly.
\end{lemma}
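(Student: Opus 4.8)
The plan is to reduce both conclusions to the hypotheses via the elementary reciprocal identities $\frac{A}{A-B}=\frac{1}{1-B/A}$ and $\frac{B}{A-B}=\frac{1}{A/B-1}$, which convert a $\liminf$ bounded strictly above $1$ into a finite $\limsup$. First I would invoke Lemma 3.5 to note that (2) and (3) are equivalent, so that assuming either hypothesis guarantees both hold simultaneously; this is precisely what allows a single hypothesis to yield both conclusions (4) and (5). I will then prove (4) from (2) and (5) from (3).

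For (4), fix $\lambda>1$. Since $[\lambda\gamma_n]\geq\gamma_n$, the difference $T_{\beta([\lambda\gamma])(n)}-T_{\beta\gamma(n)}$ is nonnegative. By (2) there exist $c>1$ and $n_0$ with $T_{\beta([\lambda\gamma])(n)}/T_{\beta\gamma(n)}\geq c$ for all $n\geq n_0$. Writing $A_n=T_{\beta([\lambda\gamma])(n)}$ and $B_n=T_{\beta\gamma(n)}$, this gives $B_n/A_n\leq 1/c$, whence
\begin{equation*}
\frac{A_n}{A_n-B_n}=\frac{1}{1-B_n/A_n}\leq\frac{1}{1-1/c}=\frac{c}{c-1}
\end{equation*}
for all $n\geq n_0$; taking $\limsup$ yields (4).

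For (5), fix $0<\lambda<1$, so $[\lambda\gamma_n]\leq\gamma_n$ and $T_{\beta\gamma(n)}-T_{\beta([\lambda\gamma])(n)}\geq0$. By (3) there exist $c>1$ and $n_0$ with $T_{\beta\gamma(n)}/T_{\beta([\lambda\gamma])(n)}\geq c$ for all $n\geq n_0$. Setting $C_n=T_{\beta\gamma(n)}$ and $D_n=T_{\beta([\lambda\gamma])(n)}$, I then have $C_n/D_n-1\geq c-1>0$, so
\begin{equation*}
\frac{D_n}{C_n-D_n}=\frac{1}{C_n/D_n-1}\leq\frac{1}{c-1}
\end{equation*}
for $n\geq n_0$, and taking $\limsup$ gives (5).

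I do not anticipate a serious obstacle here: the argument is purely algebraic once Lemma 3.5 is in hand. The only point needing mild care is that the denominators be strictly positive so the reciprocals are legitimate; this follows because the strict $\liminf$ bounds above $1$ force $A_n>B_n$ (respectively $C_n>D_n$) for all large $n$, while the finitely many smaller indices do not affect the $\limsup$.
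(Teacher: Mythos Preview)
Your proof is correct and follows essentially the same approach as the paper: both rewrite $\dfrac{A}{A-B}$ as $\dfrac{1}{1-B/A}$ and then convert the $\liminf>1$ hypothesis into a finite $\limsup$. The paper manipulates the $\limsup$ symbolically (using monotonicity of $x\mapsto 1/(1-x)$) rather than extracting an explicit constant $c$, and it leaves implicit the appeal to the equivalence lemma that you spell out, but these are cosmetic differences.
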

\begin{proof}
For $\lambda>1$, we have
\begin{eqnarray*}
\limsup_{n\to\infty} \frac{T_{\beta([\lambda\gamma])(n)}}{T_{\beta([\lambda\gamma])(n)}-T_{\beta\gamma(n)}} &=& \limsup_{n\to\infty} \frac{1}{1-\frac{T_{\beta\gamma(n)}}{T_{\beta([\lambda\gamma])(n)}}}\\
&=& \Big\{1- \limsup_{n\to\infty} \frac{T_{\beta\gamma(n)}}{T_{\beta([\lambda\gamma])(n)}}\Big\}^{-1}\\
&=& \Big\{1- \frac{1}{\liminf\limits_{n\to\infty} \frac{T_{\beta([\lambda\gamma])(n)}}{T_{\beta\gamma(n)}}}\Big\}^{-1}<\infty
\end{eqnarray*}
So by condition (2), it follows. Similarly by using (3), we can prove (5).
\end{proof}

\noindent Now on the basis of the above lemmas, we shall try to prove the following theorem.\\
\begin{theorem}
Assume $(\hat{f}_k (x))$ be a sequence of fuzzy functions which is slowly decreasing and (2) is satisfied. Moreover let $(\hat{f}_k (x))$ is $\bar{N}_{\beta\gamma} (t)$ summable to a fuzzy function $\hat{f}(x), x\in[a,b]$. Then the subsequence $\Big(\hat{f}_{\gamma_k} (x)\Big)$ of fuzzy functions converges to $\hat{f}(x)$ for all $x\in[a,b]$.
\end{theorem}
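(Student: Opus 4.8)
The plan is to adapt the classical de la Vall\'ee Poussin tail-mean argument for weighted (Riesz) means to the fuzzy-metric setting, replacing subtraction (unavailable in $L(R)$) by the metric properties (i)--(iii) of Section~2 and by the order $\preceq$ together with Lemma~2.1. Fix $x\in[a,b]$ and $\varepsilon>0$, and let $\lambda=\lambda(\varepsilon)>1$ be the number furnished by the first form of Definition~4.2. Introduce the upper tail mean
\[
\hat{\tau}_n(x)=\frac{1}{T_{\beta([\lambda\gamma])(n)}-T_{\beta\gamma(n)}}\sum_{\gamma_n<k\leq[\lambda\gamma_n]}t_k\hat{f}_k(x),
\]
which, by additivity of the finite fuzzy sums, satisfies $T_{\beta([\lambda\gamma])(n)}\hat{S}_{\beta([\lambda\gamma])(n)}(x)=T_{\beta\gamma(n)}\hat{S}_{\beta\gamma(n)}(x)+\big(T_{\beta([\lambda\gamma])(n)}-T_{\beta\gamma(n)}\big)\hat{\tau}_n(x)$. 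The goal is to prove $\hat{\tau}_n(x)\to\hat{f}(x)$, and then to squeeze the subsequence term $\hat{f}_{\gamma_n}(x)$ between $\hat{\tau}_n(x)\pm\bar{\varepsilon}$ via slow decrease.

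First I would establish the fuzzy analogue of the real identity $V_n-\sigma_n=\tfrac{P_{[\lambda n]}}{P_{[\lambda n]}-P_n}(\sigma_{[\lambda n]}-\sigma_n)$. Writing $P=T_{\beta([\lambda\gamma])(n)}$, $Q=T_{\beta\gamma(n)}$ and $R=P-Q$, the decomposition above reads $R\hat{\tau}_n+Q\hat{S}_{\beta\gamma(n)}=P\hat{S}_{\beta([\lambda\gamma])(n)}$, while trivially $R\hat{S}_{\beta\gamma(n)}+Q\hat{S}_{\beta\gamma(n)}=P\hat{S}_{\beta\gamma(n)}$. Cancelling the common summand $Q\hat{S}_{\beta\gamma(n)}$ by translation invariance (ii) and extracting scalars by homogeneity (i) yields
\[
d\big(\hat{\tau}_n(x),\hat{S}_{\beta\gamma(n)}(x)\big)=\frac{T_{\beta([\lambda\gamma])(n)}}{T_{\beta([\lambda\gamma])(n)}-T_{\beta\gamma(n)}}\,d\big(\hat{S}_{\beta([\lambda\gamma])(n)}(x),\hat{S}_{\beta\gamma(n)}(x)\big).
\]
The coefficient on the right is bounded as $n\to\infty$ by Lemma~4.2 (condition (2) being assumed), and the metric factor is at most $d(\hat{S}_{\beta([\lambda\gamma])(n)}(x),\hat{f}(x))+d(\hat{f}(x),\hat{S}_{\beta\gamma(n)}(x))$, whose two terms tend to $0$ because the sequence is $\bar{N}_{\beta\gamma}(t)$-summable (the stretched mean, whose upper endpoint $[\lambda\gamma_n]\to\infty$ while $0<\beta_n\le1$ keeps the lower endpoint fixed, is summable to the same limit). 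Hence $d(\hat{\tau}_n(x),\hat{S}_{\beta\gamma(n)}(x))\to0$, and with $\hat{S}_{\beta\gamma(n)}(x)\to\hat{f}(x)$ this gives $\hat{\tau}_n(x)\to\hat{f}(x)$.

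Next I would invoke slow decrease. On the tail range $\gamma_n<k\le[\lambda\gamma_n]$ the first form of Definition~4.2 (with $\gamma_n$ in the role of $n$) gives $\hat{f}_k(x)\succeq\hat{f}_{\gamma_n}(x)-\bar{\varepsilon}$ for all large $n$; averaging coordinatewise on $\alpha$-cuts against the positive weights $t_k$ preserves $\preceq$, so $\hat{\tau}_n(x)\succeq\hat{f}_{\gamma_n}(x)-\bar{\varepsilon}$, i.e.\ $\hat{f}_{\gamma_n}(x)\preceq\hat{\tau}_n(x)+\bar{\varepsilon}$. Symmetrically, taking the $\lambda=\lambda(\varepsilon)<1$ of the second form, the lower tail mean $\hat{\tau}'_n(x)$ over $[\lambda\gamma_n]<k\le\gamma_n$ is controlled by conditions (3) and (5) through Lemmas~4.1 and~4.2 in exactly the same manner, converges to $\hat{f}(x)$, and yields $\hat{f}_{\gamma_n}(x)\succeq\hat{\tau}'_n(x)-\bar{\varepsilon}$. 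Feeding $\hat{\tau}_n(x),\hat{\tau}'_n(x)\to\hat{f}(x)$ into these two one-sided bounds and applying Lemma~2.1 to each places $\hat{f}_{\gamma_n}(x)$ between $\hat{f}(x)\pm\overline{(\varepsilon+o(1))}$; Lemma~2.1 once more converts this order sandwich into $d(\hat{f}_{\gamma_n}(x),\hat{f}(x))\le\varepsilon+o(1)$, whence $\limsup_n d(\hat{f}_{\gamma_n}(x),\hat{f}(x))\le\varepsilon$, and, $\varepsilon>0$ being arbitrary, $\hat{f}_{\gamma_n}(x)\to\hat{f}(x)$ for every $x\in[a,b]$.

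The principal obstacle is twofold. First, the whole argument must avoid fuzzy subtraction: each cancellation that is automatic in the scalar Tauberian proof has to be rerouted through properties (i)--(iii) of $d$ and through Lemma~2.1, which is precisely what produces the displayed identity. Second, one must justify that the stretched mean $\hat{S}_{\beta([\lambda\gamma])(n)}(x)$ again converges to $\hat{f}(x)$ --- the fuzzy counterpart of ``a convergent sequence has convergent subsequences'' --- which rests on reading the $\bar{N}_{\beta\gamma}(t)$ hypothesis as convergence of the mean whenever its upper endpoint tends to infinity, the lower endpoint being pinned by $0<\beta_n\le1$. The remaining technical checks are that averaging positive weights respects $\preceq$ on every $\alpha$-cut and that conditions (3) and (5) genuinely bound the lower-tail coefficient, both of which are routine once the upper-tail case is in place.
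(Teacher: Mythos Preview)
Your proposal follows essentially the same de la Vall\'ee Poussin tail-mean argument as the paper. The paper's identity (6) is exactly your decomposition $R\hat{\tau}_n+Q\hat{S}_{\beta\gamma(n)}=P\hat{S}_{\beta([\lambda\gamma])(n)}$; from there the paper works entirely in the partial order $\preceq$ via Lemma~2.1 (its inequalities (7)--(12)), whereas you first compress the ratio step into the clean metric identity $d(\hat{\tau}_n,\hat{S}_{\beta\gamma(n)})=(P/R)\,d(\hat{S}_{\beta([\lambda\gamma])(n)},\hat{S}_{\beta\gamma(n)})$ and only pass to $\preceq$ for the slow-decrease squeeze. These are the same proof with different bookkeeping; your metric identity is in fact a tidier way to reach the intermediate conclusion $\hat{\tau}_n(x)\to\hat{f}(x)$ than the paper's chain of order inequalities.

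One caveat: your justification that $\hat{S}_{\beta([\lambda\gamma])(n)}(x)\to\hat{f}(x)$ appeals to ``$0<\beta_n\le1$ keeps the lower endpoint fixed,'' which imports a hypothesis not present in the theorem statement (that condition appears only in Theorem~3.3). The paper simply asserts this convergence as part of the $\bar{N}_{\beta\gamma}(t)$ hypothesis without further comment, so you should read the summability assumption the same way rather than add an extraneous restriction on $(\beta_n)$.
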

\begin{proof}
Let $(\hat{f}_k (x))$ is $\bar{N}_{\beta\gamma} (t)$ summable to a fuzzy function $\hat{f}(x)$ and is slowly decreasing.\\
Let $\lambda>1$. Then for each $n$, $T_{\beta([\lambda\gamma])(n)}>T_{\beta\gamma(n)}$. So for each $x\in [a,b]$, we have
\begin{eqnarray}
\frac{T_{\beta([\lambda\gamma])(n)}}{T_{\beta([\lambda\gamma])(n)}-T_{\beta\gamma(n)}}\hat{S}_{\beta([\lambda\gamma])(n)}(x)+\hat{S}_{\beta\gamma(n)}(x) &=& \frac{T_{\beta([\lambda\gamma])(n)}}{T_{\beta([\lambda\gamma])(n)}-T_{\beta\gamma(n)}} \frac{1}{T_{\beta([\lambda\gamma])(n)}}\sum\limits_{k\in\big[\beta_n,[\lambda\gamma_n]\big]} t_k\hat{f}_k(x) \nonumber \\ && + \frac{1}{T_{\beta\gamma(n)}}\sum\limits_{k\in[\beta_n,\gamma_n]} t_k\hat{f}_k(x) \nonumber \\
&=& \Big[\frac{1}{T_{\beta([\lambda\gamma])(n)}-T_{\beta\gamma(n)}} + \frac{1}{T_{\beta\gamma(n)}}\Big] \sum\limits_{k\in[\beta_n,\gamma_n]} t_k\hat{f}_k(x) \nonumber\\ && + \frac{1}{T_{\beta([\lambda\gamma])(n)}-T_{\beta\gamma(n)}} \sum\limits_{k\in\big[\gamma_n+1,[\lambda\gamma_n]\big]} t_k\hat{f}_k(x) \nonumber \\
&=& \frac{T_{\beta([\lambda\gamma])(n)}}{T_{\beta([\lambda\gamma])(n)}-T_{\beta\gamma(n)}}\hat{S}_{\beta\gamma(n)}(x) \nonumber \\ && + \frac{1}{T_{\beta([\lambda\gamma])(n)}-T_{\beta\gamma}(n)} \sum\limits_{k\in\big[\gamma_n+1,[\lambda\gamma_n]\big]} t_k\hat{f}_k(x)
\end{eqnarray}
It is given that $\lim\limits_{n\to\infty}\hat{S}_{\beta\gamma(n)}(x) = \hat{f}(x)$. Thus for each $x\in[a,b]$,
\begin{eqnarray}
\hat{f}(x)-\frac{\bar{\varepsilon}}{3} \preceq \hat{S}_{\beta\gamma(n)}(x)\preceq \hat{f}(x)-\frac{\bar{\varepsilon}}{3}
\end{eqnarray}
Also for each $x\in[a,b]$, we have
\begin{eqnarray*}
&&\limsup_{n\to\infty}d\Big(\frac{T_{\beta([\lambda\gamma])(n)}}{T_{\beta([\lambda\gamma])}(n)-T_{\beta\gamma}(n)}\hat{S}_{\beta([\lambda\gamma])(n)}(x),
\frac{T_{\beta([\lambda\gamma])(n)}}{T_{\beta([\lambda\gamma])(n)}-T_{\beta\gamma(n)}}\hat{S}_{\beta\gamma(n)}(x)\Big)\\
&=& \limsup_{n\to\infty} \frac{T_{\beta([\lambda\gamma])(n)}}{T_{\beta([\lambda\gamma])(n)}-T_{\beta\gamma(n)}}d(\hat{S}_{\beta([\lambda\gamma])(n)}(x),\hat{S}_{\beta\gamma(n)}(x))\\
&\leq& \limsup_{n\to\infty} \frac{T_{\beta([\lambda\gamma])}(n)}{T_{\beta([\lambda\gamma])(n)}-T_{\beta\gamma(n)}}\big(d(\hat{S}_{\beta([\lambda\gamma])(n)}(x),\hat{f}(x))+
d(\hat{S}_{\beta\gamma(n)}(x),\hat{f}(x))\big)\\
&\leq& \Big(\limsup_{n\to\infty} \frac{T_{\beta([\lambda\gamma])(n)}}{T_{\beta([\lambda\gamma])(n)}-T_{\beta\gamma}(n)}\Big)\Big(\lim\limits_{n\to\infty} d(\hat{S}_{\beta([\lambda\gamma])(n)}(x),\hat{f}(x))+ \lim\limits_{n\to\infty} d(\hat{S}_{\beta\gamma(n)}(x),\hat{f}(x))\Big)=0
\end{eqnarray*}
since $(\hat{f}_k (x))\in \bar{N}_{\beta\gamma} (t)$ and [4] holds. So for $\varepsilon>0$ and for large $n$ and for $x\in[a,b]$
\begin{eqnarray}
\frac{T_{\beta([\lambda\gamma])(n)}}{T_{\beta([\lambda\gamma])(n)}-T_{\beta\gamma(n)}}\hat{S}_{\beta\gamma(n)}(x)- \frac{\bar{\varepsilon}}{3}
&\preceq& \frac{T_{\beta([\lambda\gamma])(n)}}{T_{\beta([\lambda\gamma])(n)}-T_{\beta\gamma(n)}}\hat{S}_{\beta([\lambda\gamma])(n)}(x) \nonumber \\
&& \preceq \frac{T_{\beta([\lambda\gamma])(n)}}{T_{\beta([\lambda\gamma])}(n)-T_{\beta\gamma(n)}}\hat{S}_{\beta\gamma(n)}(x) + \frac{\bar{\varepsilon}}{3}
\end{eqnarray}
and thus using (7) and (8), we get
\begin{eqnarray*}
\frac{T_{\beta([\lambda\gamma])(n)}}{T_{\beta([\lambda\gamma])(n)}-T_{\beta\gamma(n)}}\hat{S}_{\beta\gamma(n)}(x) + \frac{2\bar{\varepsilon}}{3}+\hat{f}(x)
&\succeq& \frac{T_{\beta([\lambda\gamma])(n)}}{T_{\beta([\lambda\gamma])}(n)-T_{\beta\gamma}(n)}\hat{S}_{\beta([\lambda\gamma])(n)}(x) + \hat{S}_{\beta\gamma(n)}(x)\\
&=&  \frac{T_{\beta([\lambda\gamma])(n)}}{T_{\beta([\lambda\gamma])(n)}-T_{\beta\gamma(n)}}\hat{S}_{\beta\gamma(n)}(x)\\ && + \frac{1}{T_{\beta([\lambda\gamma])(n)}-T_{\beta\gamma(n)}}\sum\limits_{k\in\big[\gamma_n+1,[\lambda\gamma_n]\big]} t_k\hat{f}_k(x) \\ && \mbox{(from [6])}\\
&\succeq& \frac{T_{\beta([\lambda\gamma])(n)}}{T_{\beta([\lambda\gamma])(n)}-T_{\beta\gamma(n)}}\hat{S}_{\beta\gamma(n)}(x) \\ && +  \frac{1}{T_{\beta([\lambda\gamma])(n)}-T_{\beta\gamma(n)}}\sum\limits_{k\in\big[\gamma_n+1,[\lambda\gamma_n]\big]} t_k\Big(\hat{f}_{\gamma_n}(x)-\frac{\bar{\varepsilon}}{3}\Big)\\
&& \mbox{(since $(\hat{f}_n(x))$ is slowly decreasing)}\\
&=& \frac{T_{\beta([\lambda\gamma])(n)}}{T_{\beta([\lambda\gamma])(n)}-T_{\beta\gamma(n)}}\hat{S}_{\beta\gamma(n)}(x) +  \hat{f}_{\gamma_n}(x)-\frac{\bar{\varepsilon}}{3}
\end{eqnarray*}
Thus we get
\begin{eqnarray}
\hat{f}(x)+\bar{\varepsilon} &\succeq& \hat{f}_{\gamma_n}(x)
\end{eqnarray}
On the other case let $0<\lambda<1$. So for each $n$, $T_{\beta([\lambda\gamma])(n)}<T_{\beta\gamma(n)}$.
\begin{eqnarray}
\frac{T_{\beta([\lambda\gamma])(n)}}{T_{\beta\gamma(n)}-T_{\beta([\lambda\gamma])(n)}} \hat{S}_{\beta([\lambda\gamma])(n)}(x)
&+& \frac{1}{T_{\beta\gamma(n)} - T_{\beta([\lambda\gamma])(n)}} \sum\limits_{k\in\big[[\lambda\gamma_n]+1,\gamma_n\big]} t_k\hat{f}_k(x) \nonumber \\
&=& \frac{1}{T_{\beta\gamma(n)} - T_{\beta([\lambda\gamma])(n)}} \Big[\sum\limits_{k\in\big[\beta_n,[\lambda\gamma_n]\big]} t_k\hat{f}_k(x)\nonumber\\
&& +\sum\limits_{k\in[[\lambda\gamma_n]+1,\gamma_n]} t_k\hat{f}_k(x)\Big] \nonumber\\
&=& \frac{1}{T_{\beta\gamma(n)} - T_{\beta([\lambda\gamma])(n)}} \sum\limits_{k\in[\beta_n,\gamma_n]} t_k\hat{f}_k(x)\nonumber\\
&=& \frac{T_{\beta\gamma(n)}}{T_{\beta\gamma(n)} - T_{\beta([\lambda\gamma])(n)}} \hat{S}_{\beta\gamma(n)}(x)\nonumber\\
&=& \frac{T_{\beta([\lambda\gamma])(n)}}{T_{\beta\gamma(n)} - T_{\beta([\lambda\gamma])(n)}} \hat{S}_{\beta\gamma(n)}(x) \nonumber\\ &&+\hat{S}_{\beta\gamma(n)}(x)
\end{eqnarray}
Also proceeding as before and with the help of [5], we get
\begin{eqnarray}
\frac{T_{\beta([\lambda\gamma])(n)}}{T_{\beta\gamma(n)}-T_{\beta([\lambda\gamma])(n)}}\hat{S}_{\beta([\lambda\gamma])(n)}(x)- \frac{\bar{\varepsilon}}{3}
&\preceq& \frac{T_{\beta([\lambda\gamma])(n)}}{T_{\beta([\lambda\gamma])}(n)-T_{\beta\gamma(n)}}\hat{S}_{\beta\gamma(n)}(x) \nonumber \\
&\preceq& \frac{T_{\beta([\lambda\gamma])(n)}}{T_{\beta\gamma(n)}-T_{\beta([\lambda\gamma])(n)}}\hat{S}_{\beta([\lambda\gamma])(n)}(x) \nonumber\\
&& + \frac{\bar{\varepsilon}}{3}
\end{eqnarray}
Thus for each $x\in[a,b]$
\begin{eqnarray*}
&& \frac{T_{\beta([\lambda\gamma])(n)}}{T_{\beta\gamma(n)}-T_{\beta([\lambda\gamma])(n)}}\hat{S}_{\beta([\lambda\gamma])(n)}(x)+\hat{f}(x)- \frac{2\bar{\varepsilon}}{3}\\
&\preceq& \frac{T_{\beta([\lambda\gamma])(n)}}{T_{\beta\gamma(n)} - T_{\beta([\lambda\gamma])(n)}} \hat{S}_{\beta\gamma(n)}(x) +\hat{S}_{\beta\gamma(n)}(x)\\
&& \mbox{(from [7] and [11])}\\
&=& \frac{T_{\beta([\lambda\gamma])(n)}}{T_{\beta\gamma(n)}-T_{\beta([\lambda\gamma])(n)}}\hat{S}_{\beta([\lambda\gamma])(n)}(x) + \frac{1}{T_{\beta\gamma(n)} - T_{\beta([\lambda\gamma])(n)}} \sum\limits_{k\in\big[[\lambda\gamma_n]+1,\gamma_n\big]} t_k\hat{f}_k(x)\\
&& \mbox{(follows from [10])}\\
&\preceq& \frac{T_{\beta([\lambda\gamma])(n)}}{T_{\beta\gamma(n)}-T_{\beta([\lambda\gamma])(n)}}\hat{S}_{\beta([\lambda\gamma])(n)}(x) + \hat{f}_{\lambda_n}(x) +\frac{\bar{\varepsilon}}{3}
\end{eqnarray*}
since $\hat{f}_n(x)$ is slowly decreasing. Thus
\begin{eqnarray}
\hat{f}(x)- \bar{\varepsilon} \preceq \hat{f}_{\gamma_n}(x)
\end{eqnarray}
Combining [9] and [12], we get
\begin{eqnarray*}
\hat{f}(x)- \bar{\varepsilon} \preceq \hat{f}_{\gamma_n}(x) \preceq \hat{f}(x)+ \bar{\varepsilon}, \ \ x\in[a,b]
\end{eqnarray*}
So $\Big(\hat{f}_{\gamma_k} (x)\Big)$ converges to $\hat{f}(x)$ for all $x\in[a,b]$.
\end{proof}

\begin{remark}
In case of $\gamma_n=n$ and $\beta_n=1$, we get the Tauberian theorem as in \cite{O} in case of fuzzy numbers. So the above theorem is definitely a generalization of the previous versions of the Tauberian theorem in the literature.
\end{remark}

\end{document}